\newcommand\GreenL{\mathscr{L}}
\newcommand\GreenR{\mathscr{R}}
\newcommand\GreenH{\mathscr{H}}
\newcommand\GreenD{\mathscr{D}}
\newcommand\GreenJ{\mathscr{J}}
\newcommand\trop{\mathbb{T}}
\newcommand\olt{{\overline{\mathbb{T}}}}
\newcommand\ft{\mathbb{FT}}
\newcommand\tropn{\trop^{n \times n}}
\newcommand\oltn{\olt^{n \times n}}
\newcommand\ftn{\ft^{n \times n}}
\newcommand{\br}[2]{\langle {#1} \mid {#2} \rangle}
\newcommand{\rb}[1]{(\br{r_1}{#1}, \dots, \br{r_k}{#1} )}
\newtheorem{theorem}{Theorem}[section]
\newtheorem{lemma}[theorem]{Lemma}
\newtheorem{corollary}[theorem]{Corollary}
\newtheorem{proposition}[theorem]{Proposition}
\theoremstyle{definition}
\theoremstyle{definition}
\begin{document}

\title[Tropical matrix duality and Green's $\GreenD$ relation]{Tropical matrix duality and Green's $\GreenD$ relation}

\maketitle

\begin{center}

    CHRISTOPHER HOLLINGS\footnote{Christopher Hollings' current address:
Mathematical Institute, University of Oxford, \\ 24--29 St Giles', Oxford OX1 3LB. Email \texttt{christopher.hollings@maths.ox.ac.uk}.} and
 MARK KAMBITES\footnote{Email \texttt{Mark.Kambites@manchester.ac.uk}.}

    \medskip

    School of Mathematics, \ University of Manchester, \\
    Manchester M13 9PL, \ England.

\date{\today}
\keywords{}
\thanks{}

\end{center}

\begin{abstract}
We give a complete description of Green's $\GreenD$ 
relation for the multiplicative semigroup of all $n \times n$ tropical 
matrices. Our main tool is a new variant on the \textit{duality} between 
the row and column space of a tropical matrix (studied by Cohen, Gaubert 
and Quadrat and separately by Develin and Sturmfels). Unlike the existing 
duality theorems, our version admits a converse, and hence gives a necessary
and sufficient condition 
for two tropical convex sets to be the row and column space of a matrix. 
We also show that the matrix duality map induces an isometry (with respect 
to the Hilbert projective metric) between the projective row space and 
projective column space of any tropical matrix, and establish some
foundational results about Green's other relations.
\end{abstract}
\maketitle

\medskip

Tropical algebra (also known as max-plus algebra or max-algebra) is the
algebra of the real numbers (sometimes augmented with $-\infty$ and/or
$+\infty$) when equipped
with the binary operations of addition and maximum. It has traditional
applications in a wide range of subjects, such as combinatorial optimisation
and scheduling problems \cite{Butkovic03}, analysis of discrete event
systems \cite{MaxPlus95}, control theory \cite{Cohen99}, formal language
and automata theory \cite{Pin98, Simon94} and combinatorial/geometric group
theory \cite{Bieri84}. More recently, exciting connections have emerged with
algebraic geometry \cite{Bergman71, Mikhalkin05, RichterGebert05}; these
have also led to new applications in areas such as phylogenetics \cite{Eriksson05}
and statistical inference \cite{Pachter04}. The first
detailed axiomatic study was conducted by
Cuninghame-Green \cite{CuninghameGreen79} and this theory has been
developed further by a number of researchers (see \cite{Baccelli92,
Heidergott06} for surveys).

Since many of the problems which arise in application areas are naturally
expressed in terms of (max-plus) linear equations, much of tropical algebra
is concerned with matrices.
Many researchers have had cause to
prove \textit{ad hoc} results about the
\textit{multiplication} of tropical matrices; there has also been considerable
attention paid to certain special questions such as Burnside-type problems for
semigroups of tropical matrices \cite{dAlessandro04,Gaubert06,Pin98,Simon94}.
Surprisingly, though, there has been relatively little systematic study of
these semigroups, and little is known about their abstract algebraic structure.
In particular, there has been until recently no understanding of the semigroup
of all matrices of a given size over the tropical semiring, comparable with
the classical theory of the general linear group or full matrix semigroup
over a field. The detailed study of this semigroup was recently initiated
by Johnson and the second author \cite{K_tropicalgreen} and independently
by Izhakian and Margolis \cite{Izhakian10}.

Green's relations \cite{Green51,Clifford61} are five equivalence relations
($\mathcal{L}$, $\mathcal{R}$, $\mathcal{H}$, $\mathcal{D}$ and $\mathcal{J}$)
and three pre-orders ($\leq_\GreenR$, $\leq_\GreenL$ and $\leq_\GreenJ$)
which can be defined upon any semigroup, and which encapsulate the structure
of its principal left, right and two-sided ideals and maximal subgroups. They
are
amongst the most powerful tools for understanding the structure of
semigroups and monoids, and play a key role in almost every aspect of
modern semigroup theory.
The relations $\leq_\GreenR$, $\leq_\GreenL$, $\mathscr{L}$, $\mathscr{R}$
and $\mathscr{H}$ are easily described for matrix semigroups over arbitrary
semirings with identity and zero. For example, two matrices are
$\mathscr{R}$-related exactly if they have the same column space; see
\cite{K_tropicalgreen} or Section~\ref{sec_greens} below for a more
detailed explanation.

The relations $\mathscr{D}$, $\mathscr{J}$ and $\leq_\GreenJ$, however, are
rather more subtle. In the classical case of a finite dimensional full matrix
semigroup over a field, it is well known that $\GreenD$ and $\GreenJ$ coincide
and encapsulate the concept of \textit{rank}, with the pre-order $\leq_\GreenJ$
corresponding to the obvious order on ranks. Johnson and the
second author \cite{K_tropicalgreen} showed that $\GreenD$ and $\GreenJ$ also
coincide for the $2 \times 2$ tropical matrix semigroup, with the $\GreenJ$-class
of a matrix being determined by the \textit{isometry type} of its row space
(or equivalently, its column space).

The main aim of the present paper is to give a complete description of Green's $\mathscr{D}$
relation for the full matrix semigroup of arbitrary finite dimension over the
tropical semiring. Specifically, we show that two matrices are $\mathscr{D}$-related
exactly if their row spaces (or equivalently, their column spaces) are
isomorphic as semimodules (Theorems~\ref{thm_d} and \ref{thm_dtrop} below).

While our main result develop those from the $2 \times 2$ case,
proved in \cite{K_tropicalgreen}, the methods used here are entirely
different. The
results of \cite{K_tropicalgreen} were obtained naively by direct algebraic
manipulations, which are unenlightening even in two dimensions, and quickly
become impractical in higher dimensions. Here, our approach is geometric,
with the main tool being the phenomenon
of \textit{duality} between the row space and column space of a tropical
matrix. It is well known that there
is a natural and canonical bijection between the row space and column space
of a given tropical matrix which preserves certain aspects of their structure.
Specifically, Cohen, Gaubert and Quadrat \cite{Cohen04} have showed that it is an
antitone lattice isomorphism, while Develin and Sturmfels \cite{Develin04}
have proved that it induces a combinatorial isomorphism of (Euclidean) polyhedral
complexes. Here we shall employ a slight variation of the duality theorem, which states
that the duality map is in a certain algebraic sense an \textit{anti-isomorphism}
of semimodules (Theorem~\ref{thm_duality}). The important thing for our
purpose is that this version of duality admits a converse: any two finitely generated convex
sets which are anti-isomorphic must be the row and column space of a
tropical matrix (Theorem~\ref{thm_dgeometry}). As a corollary of our duality
theorem, we also observe that the duality map is an isometry (with respect
to the Hilbert projective metric) between the row and column spaces of any
tropical matrix. We believe that these results
are likely to be of independent interest.

In addition to this introduction, the paper comprises six sections.
Section~\ref{sec_prelims} provides a brief summary of the necessary
background material from tropical algebra and geometry. Section~\ref{sec_duality}
introduces and proves our new version of the tropical duality theorem, and discusses
briefly its relationship to existing results. Section~\ref{sec_greens}
recalls the definitions of Green's relations, and establishes some basic
properties of Green's relations in finite dimensional full matrix semigroups over the
tropical semirings. Section~\ref{sec_converseduality}
proves our converse to the duality theorem. Section~\ref{sec_greend}
applies in the preceding results to describe Green's $\GreenD$ relation in
these semigroups. Finally, Section~\ref{sec_remarks} contains some remarks
on questions remaining open and the potential application of our methods in
wider contexts.

Because of a recent proliferation of applications, tropical mathematics
is now of interest to a broad range of researchers with radically different
motivations and backgrounds. This article is likely to be of interest to
many of these people and, in addition, to abstract semigroup theorists with
no experience of tropical mathematics. For this reason, we have endeavoured to
keep the article self-contained by minimising the use of specialist
terminology, notation and machinery, and by including elementary proofs for
foundational results where feasible. In so doing we crave the
indulgence of specialists in this area, who may feel that parts of the paper
could be expressed more concisely.

\section{Preliminaries}\label{sec_prelims}

The \textit{finitary tropical semiring} $\ft$ is the semiring (without
additive identity) consisting of the real numbers under the operations
of addition and maximum.
The \textit{tropical semiring} $\mathbb{T}$ is the finitary tropical semiring
augmented with an extra element $-\infty$, which acts as a zero for addition
and an identity for maximum. The \textit{completed tropical semiring} $\olt$
is the tropical semiring augmented with an extra element $\infty$, which
acts as a zero for both maximum and addition, save that
$(-\infty) + \infty = \infty + (-\infty) = -\infty$. 
These structures share many of the properties of fields, with addition
and maximum playing the roles of field multiplication and field addition
respectively; for this reason we write $a \oplus b$ for $\max(a,b)$ and
$a \otimes b$ or just $ab$ for $a + b$. In particular, note that $\otimes$
distributes over $\oplus$ and that both operations are commutative and
associative. Hence, they give rise to a natural, associative multiplication
on matrices over $\olt$.

We extend the usual order $\leq$ on the reals to a total order on $\olt$
in the obvious way, with $-\infty < x < \infty$ for all $x \in \mathbb{R}$,
noting that $a \oplus b = a$ exactly if $b \leq a$ and that $\olt$ under
$\leq$ is a complete lattice.
The semirings $\ft$ and $\olt$ admit a natural order-reversing
involution, $x \mapsto -x$. In $\ft$ this involution obviously
distributes over $\otimes$ (so $-(xy) = (-x)(-y)$), but more caution is
required in $\olt$ where $- (\infty (-\infty)) \neq (-\infty) \infty$. The involution
has the following elementary property, which is obvious in $\ft$ but needs
to be verified by case analysis in $\olt$.

\begin{proposition}\label{prop_completeineq}
For any $a, b, c \in \olt$ we have
$$a b \leq c \iff a (-c) \leq (-b).$$
\end{proposition}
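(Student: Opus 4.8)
The plan is to dispose first of the generic (finite) case, then handle the infinite elements by a short case analysis, using a symmetry of the statement to halve the work. The trivial observation to record at the outset is that when $a,b,c \in \ft$ are all finite, the claim is just the elementary real-number equivalence $a+b \leq c \iff a - c \leq -b$, obtained by adding $-b-c$ to both sides; this is exactly what is meant by the assertion that the proposition is ``obvious in $\ft$''.

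Next I would exploit a self-duality to reduce to a single implication. The involution $x \mapsto -x$ on $\olt$ satisfies $-(-b) = b$ and $-(-c) = c$, so the map $(b,c) \mapsto (-c,-b)$ carries the equivalence $ab \leq c \iff a(-c) \leq (-b)$ to itself. Concretely, suppose I can establish the forward implication $ab \leq c \Rightarrow a(-c) \leq (-b)$ for all $a,b,c \in \olt$. Applying this statement with $b$ replaced by $-c$ and $c$ replaced by $-b$ gives $a(-c) \leq (-b) \Rightarrow ab \leq c$, which is precisely the converse. Hence it suffices to prove one direction.

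To prove the forward implication I would split on the value of $a$, then on $b$ and (where needed) $c$, among the three possibilities $-\infty$, finite, and $\infty$. When $a = -\infty$ both products $ab$ and $a(-c)$ equal $-\infty$, so both inequalities hold automatically. When $a \neq -\infty$, the delicate interactions are exactly those in which an $\infty$ meets a $-\infty$: if $ab = \infty$ then the hypothesis forces $c = \infty$, whence $-c = -\infty$ and $a(-c) = a \otimes (-\infty) = -\infty \leq (-b)$; and if $b = -\infty$ then $ab = -\infty \leq c$ holds trivially while $-b = \infty$ makes the conclusion automatic. The only remaining configuration is $a$ and $b$ both finite, which reduces either to the finite arithmetic above (when $c$ is finite), to a trivially true inequality (when $c = \infty$), or to a vacuous one (when $c = -\infty$, where the hypothesis $ab \leq -\infty$ cannot hold).

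The main obstacle is purely bookkeeping rather than conceptual: the exceptional rule $(-\infty)\otimes\infty = \infty\otimes(-\infty) = -\infty$ breaks the naive expectation that $\infty$ is absorbing for $\otimes$, so $\infty$ and $-\infty$ cannot be treated symmetrically and the mixed cases must be checked by hand. The self-duality of the previous paragraph is what prevents this checking from doubling in length, and the order-reversing property of the involution, namely $a \leq b \Rightarrow -a \geq -b$, is the single structural fact that makes every non-finite case collapse to a triviality.
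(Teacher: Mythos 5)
Your proposal is correct and follows essentially the same route as the paper: prove the forward implication only, obtain the converse from the involution symmetry (which the paper dismisses as ``dual'' and you helpfully make explicit via the substitution $b \mapsto -c$, $c \mapsto -b$), and then dispose of the infinite elements by elementary case analysis, reducing to real arithmetic in the finite case. The only difference is cosmetic: the paper cases on which of $a$, $b$, $c$ is $\pm\infty$ in turn, while you case on $a$ and then on whether $ab = \infty$ or $b = -\infty$, but the content is identical.
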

\begin{proof}
We suppose that $a b \leq c$ and show that $a (-c) \leq (-b)$, the
reverse implication being dual. If $a$, $b$ and $c$ are all finite then
the claim is immediate. If $a = -\infty$ then $a(-c) = -\infty$ so the
desired inequality holds. If $a = \infty$ then for $a b \leq c$ we must
have either $b = - \infty$ or $c = \infty$; in both cases the desired
inequality again holds. If $b = -\infty$ then the claim is immediate,
while if $b = \infty$ then we must have either $a = -\infty$ or $c = \infty$,
again guaranteeing the claim. Finally, if $c = \infty$ then the claim is
immediate, while if $c = -\infty$ then we must have $a = -\infty$ or
$b = -\infty$, placing us in a case we have already considered.
\end{proof}

For $S \in \lbrace \ft, \mathbb{T}, \olt \rbrace$ we are interested in
the space $S^n$
of \textit{(affine tropical) vectors}. We write $x_i$ for the $i$th component
of a vector $x \in S^n$. We extend $\oplus$ and $\leq$
to $S^n$ componentwise, so that $(x \oplus y)_i = x_i \oplus y_i$, 
and $x \leq y$ exactly if $x_i \leq y_i$
for all $i$.
Sometimes we wish to stress that a space $S^n$ is composed of row vectors or
of column vectors, in which case we write
$S^{1 \times n}$ or $S^{n \times 1}$
respectively. We define a scaling action of $S$ on $S^n$ by
$$\lambda (x_1, \ldots, x_n) = (x_1+\lambda, \ldots, x_n+\lambda)$$
for each $\lambda, x_1, \dots, x_n \in S$. From affine tropical $n$-space we obtain
\textit{projective tropical $(n-1)$-space} (denoted $\mathbb{PFT}^{n-1}$,
$\mathbb{PT}^{n-1}$ or $\overline{\mathbb{PT}}^{n-1}$ as appropriate) by identifying two
vectors if one is a tropical multiple of the other by an element of $\ft$.  

An ($S$-linear) \textit{convex set} in $S^n$ is a subset closed under
$\oplus$ and scaling by elements of $S$, that is, a linear subspace of $S^n$.
If $X \subseteq S^n$ then the ($S$-linear) \textit{span} or \textit{convex hull} of $T$
is the set of all vectors which can be written as tropical linear
combinations (with scaling and vector addition $\oplus$ as defined
above) of finitely many vectors from $X$ with coefficients drawn from $S$. It is
easily seen that if $X \subseteq \ft^n$ then the $\ft$-linear span of $X$ 
is the intersection with $\ft^n$ of the $\mathbb{T}$-linear span of $X$.
Similarly, if $X \subseteq \mathbb{T}^n$ then the $\mathbb{T}$-linear span
of $X$ is the intersection with $\mathbb{T}^n$ of the $\olt$-linear span of
$X$.
Each convex set $X$ induces a subset of the corresponding projective space,
termed the \textit{projectivisation} of $X$.
Convex sets in tropical space have a very interesting structure, which has
been extensively studied
(see for example \cite{Block06,Butkovic07,Cohen04,Cohen05,Develin04,Gaubert06b,Gaubert07,Joswig07})
but is still not fully understood.

We define a scalar product operation $\olt^n \times \olt^n \to \olt$
on completed tropical $n$-space $\olt^n$ by setting
$$\br{a}{b} = \max \lbrace \lambda \in \olt \mid \lambda a \leq b \rbrace.$$
The existence of such a maximum is easily verified by case analysis.
This operation is a \textit{residual} operation in the sense of
residuation theory \cite{Blyth72}; it has been quite
extensively used in tropical mathematics (see for example \cite{Cohen04}).
We shall need some elementary properties of this operation.

\begin{proposition}\label{prop_bracketequiv}
For any $n \geq 1$ and $x, y \in \olt^n$,
$$\br{x}{y} = -\left(\bigoplus_{i=1}^n\{x_i \otimes (-y_i) \}\right).$$
\end{proposition}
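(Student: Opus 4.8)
The plan is to unwind the definition of the scalar product and reduce the claim, coordinate by coordinate, to the single-variable residuation identity already packaged in Proposition~\ref{prop_completeineq}. Writing the scaling action out componentwise, we have
$$\br{x}{y} = \max \{ \lambda \in \olt \mid \lambda \otimes x_i \leq y_i \text{ for all } i \},$$
so the argument splits into two parts: (a) rewriting, for each fixed $i$, the scalar inequality $\lambda \otimes x_i \leq y_i$ as an explicit upper bound on $\lambda$, and (b) combining these $n$ bounds and extracting the maximum.

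For step (a) the key observation is that Proposition~\ref{prop_completeineq}, specialised by taking the second factor to be the multiplicative identity $0$ (noting $d \otimes 0 = d$ and $-0 = 0$ throughout $\olt$), yields the equivalence $d \leq e \iff d \otimes (-e) \leq 0$ for all $d, e \in \olt$. I would apply this twice. First, with $d = \lambda \otimes x_i$ and $e = y_i$, it gives $\lambda \otimes x_i \leq y_i \iff \lambda \otimes x_i \otimes (-y_i) \leq 0$. Then, after regrouping the product using associativity and commutativity of $\otimes$ on $\olt$, I apply it again with $d = \lambda$ and $e = -(x_i \otimes (-y_i))$, so that $-e = x_i \otimes (-y_i)$, obtaining $\lambda \otimes (x_i \otimes (-y_i)) \leq 0 \iff \lambda \leq -(x_i \otimes (-y_i))$. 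Hence each coordinate constraint is exactly the bound $\lambda \leq -(x_i \otimes (-y_i))$.

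For step (b), the admissible set is therefore the set of common lower bounds of the finite family $\{ -(x_i \otimes (-y_i)) \}_i$, that is, $\{ \lambda \mid \lambda \leq \bigwedge_{i=1}^n -(x_i \otimes (-y_i)) \}$. Since $z \mapsto -z$ is an order-reversing involution of the complete lattice $\olt$, it carries joins to meets, so $\bigwedge_{i=1}^n -(x_i \otimes (-y_i)) = -\bigl( \bigoplus_{i=1}^n (x_i \otimes (-y_i)) \bigr)$. This bound is itself an element of $\olt$, hence it is attained and equals the maximum, giving the stated formula (and, as a by-product, reconfirming the existence of the maximum asserted just after the definition of $\br{\cdot}{\cdot}$).

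The only delicate point, and the one I expect to be the main obstacle, is the arithmetic of the completed semiring $\olt$: the naive finitary computation $\lambda + x_i \leq y_i \iff \lambda \leq y_i - x_i$ can fail when entries are $\pm\infty$, precisely because of the convention $(-\infty) + \infty = -\infty$, and the regrouping of $\lambda \otimes x_i \otimes (-y_i)$ must remain valid even when both infinities occur. Routing every inequality manipulation through Proposition~\ref{prop_completeineq} rather than through direct subtraction is exactly what sidesteps these anomalies; once step (a) is set up in that form, the lattice bookkeeping of step (b) is routine.
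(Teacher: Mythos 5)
Your proof is correct, and it takes a genuinely different route from the paper's, even though both rest on the same key tool. The paper argues by explicit case analysis on the infinite entries: it first observes that coordinates with $x_i = -\infty$ or $y_i = \infty$ contribute to neither side (and that both sides equal $\infty$ when every coordinate is of this kind), then shows that any remaining coordinate with $x_i = \infty$ or $y_i = -\infty$ forces both sides to $-\infty$, and only in the residual all-finite case does it invoke Proposition~\ref{prop_completeineq}, once per coordinate, to conclude $\br{x}{y} = -\max_i (x_i - y_i)$. You instead prove one uniform per-coordinate equivalence, $\lambda \otimes x_i \leq y_i \iff \lambda \leq -(x_i \otimes (-y_i))$, valid for arbitrary elements of $\olt$, by specialising Proposition~\ref{prop_completeineq} at $b = 0$ (giving $d \leq e \iff d \otimes (-e) \leq 0$, using $d \otimes 0 = d$ and $-0 = 0$) and applying that twice, with an associativity regrouping in between; the identity then follows from a routine lattice computation, and the existence of the maximum comes out as a by-product rather than being assumed. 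Both arguments ultimately hinge on Proposition~\ref{prop_completeineq}, which is where the anomaly $(-\infty) \otimes \infty = -\infty$ is tamed once and for all; your double application simply exploits its full $\olt$-validity to absorb the degenerate cases that the paper disposes of by hand. What your route buys is uniformity and economy (no case split at all at this level); what the paper's route offers is an explicit record of how the two kinds of degenerate coordinates behave (one kind is vacuous, the other collapses the bracket to $-\infty$), a concrete picture that its later case-driven arguments, such as the proof of Lemma~\ref{lemma_changecoords}, echo.
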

\begin{proof}
First, notice that if $x_i = -\infty$ or $y_i = \infty$ then
$x_i (-y_i) = -\infty$ and $\lambda x_i \leq y_i$ for all $\lambda$, so
the $i$th component makes no contribution to the maximum in the
statement and no difference to the maximum in the definition of
$\br{x}{y}$. If \textit{all} components are like this, then it is readily
verified that both $\br{x}{y}$ and the right hand side in the statement take the
value $\infty$, so the proposition holds. Otherwise, we may discard any
such components and seek to prove the claim only in the case that
$x_i \neq -\infty$ and $y_i \neq \infty$ for all $i$.

If $x_i = \infty$ [respectively $y_i = -\infty$] then since we are assuming
$y_i \neq \infty$ [$x_i \neq -\infty$] we have $x_i (-y_i) = \infty$ so
that
$$-\bigoplus_{i=1}^n\{x_i (-y_i) \} = -\infty.$$
We also have
$\lambda x_i > y_i$ for all $\lambda > -\infty$, so that $\langle x \mid y \rangle = -\infty$.

Thus, we may assume that $x, y \in \ft^n$. Now by definition we have
$\lambda x \leq y$ if and only if
$\lambda x_i \leq y_i$ for all $i$. By Proposition~\ref{prop_completeineq}
this is true exactly if $\lambda (- y_i) \leq (-x_i)$, that is, if and
only if
$-\lambda \geq x_i - y_i$ for all $i$. It follows that $\max \lbrace x_i - y_i \rbrace$
is the smallest $-\lambda$ such that $\lambda x \leq y$, and so
$- \max \lbrace x_i - y_i \rbrace$ is the largest $\lambda$ such that
$\lambda x \leq y$, which is by definition $\langle x \mid y \rangle$.
\end{proof}

\begin{proposition}\label{prop_bracketsignchange}
Let $x, y \in \olt^n$. Then $\langle x \mid y \rangle = \langle -y \mid -x \rangle$
\end{proposition}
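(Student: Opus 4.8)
The plan is to reduce everything to the defining inequality and then invoke Proposition~\ref{prop_completeineq} componentwise. Recall that by definition $\br{x}{y} = \max\{\lambda \in \olt \mid \lambda x \leq y\}$ and $\br{-y}{-x} = \max\{\lambda \in \olt \mid \lambda(-y) \leq (-x)\}$, where both the scaling and the order are taken componentwise. The whole statement will follow once I show that the two sets $\{\lambda \mid \lambda x \leq y\}$ and $\{\lambda \mid \lambda(-y) \leq -x\}$ coincide, since their maxima are then automatically equal; the existence of these maxima has already been noted.

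First I would unpack the order componentwise: $\lambda x \leq y$ holds exactly when $\lambda x_i \leq y_i$ for every $i$, and likewise $\lambda(-y) \leq -x$ holds exactly when $\lambda(-y_i) \leq -x_i$ for every $i$, using that the involution on vectors acts componentwise so that $(-x)_i = -x_i$. Then, for each fixed index $i$, I would apply Proposition~\ref{prop_completeineq} with $a = \lambda$, $b = x_i$ and $c = y_i$, which gives precisely $\lambda x_i \leq y_i \iff \lambda(-y_i) \leq -x_i$. Quantifying over all $i$ shows that the two defining conditions are equivalent, whence the two sets of admissible $\lambda$ are identical and the two maxima agree.

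The one thing to watch is the behaviour at $\pm\infty$, but this is exactly what Proposition~\ref{prop_completeineq} was set up to handle, so no separate case analysis is needed here. An alternative, equally short route would bypass the definition entirely: substituting $-y$ and $-x$ into the closed formula of Proposition~\ref{prop_bracketequiv} turns its right-hand side into $-\bigoplus_i\{(-y_i)\otimes(-(-x_i))\} = -\bigoplus_i\{(-y_i)\otimes x_i\}$, using that $-(-a) = a$ for all $a \in \olt$, and this in turn equals $-\bigoplus_i\{x_i \otimes (-y_i)\} = \br{x}{y}$ by commutativity of $\otimes$. I expect the direct argument via Proposition~\ref{prop_completeineq} to be marginally cleaner, since it sidesteps any reasoning about $\otimes$ at infinity.
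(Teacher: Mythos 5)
Your main argument is correct and is essentially identical to the paper's own proof: both apply Proposition~\ref{prop_completeineq} componentwise (with $a=\lambda$, $b=x_i$, $c=y_i$) to show that the sets $\{\lambda \mid \lambda x \leq y\}$ and $\{\lambda \mid \lambda(-y) \leq -x\}$ coincide, and then take maxima. Your alternative route via Proposition~\ref{prop_bracketequiv} is also valid (commutativity of $\otimes$ holds throughout $\olt$), but it is not needed.
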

\begin{proof}
By Proposition~\ref{prop_completeineq} we have
$\lambda (-y_i) \leq (-x_i)$ for all $i$ if and only if
$\lambda x_i \leq y_i$ for all $i$. Thus, $\lambda (-y) \leq (-x)$
if and only if $\lambda x \leq y$, so
$$\langle x \mid y \rangle = \max \lbrace \lambda \mid \lambda x \leq y \rbrace = \max \lbrace \lambda \mid \lambda (-y) \leq (-x) \rbrace = \langle -y \mid -x \rangle.$$
\end{proof}

\begin{proposition}\label{prop_orderbracket}
For any $x, y \in \olt^n$, we have $x \leq y$ if and only if
$\br{x}{y} \geq 0$.
\end{proposition}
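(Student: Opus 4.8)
The plan is to reduce the statement directly to the defining property of the scalar product, exploiting the fact that $0 \in \olt$ is the identity for the scaling action: since $0 \otimes x_i = x_i$ for every component, we have $0 \cdot x = x$. Thus the condition $x \leq y$ is precisely the condition that $\lambda = 0$ lies in the set $\lbrace \lambda \in \olt \mid \lambda x \leq y \rbrace$ whose maximum defines $\br{x}{y}$.

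For the forward implication, suppose $x \leq y$. Then $0 \cdot x = x \leq y$, so $0$ belongs to this set, and hence its maximum $\br{x}{y}$ is at least $0$. For the converse, suppose $\br{x}{y} \geq 0$. Since the maximum defining $\br{x}{y}$ is attained (as noted immediately after its definition), I may set $\lambda_0 = \br{x}{y} \geq 0$ and conclude $\lambda_0 x \leq y$. It then suffices to verify that the scaling action is monotone in the scalar, namely that $0 \leq \lambda_0$ forces $0 \cdot x \leq \lambda_0 \cdot x$; combined with $\lambda_0 x \leq y$ this gives $x = 0 \cdot x \leq y$, as required.

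The one point genuinely needing care — and the step I would treat most explicitly — is this monotonicity of scaling in the presence of the infinite elements of $\olt$. For a finite component $x_i$ it is immediate from monotonicity of addition on $\mathbb{R}$, and for $x_i = -\infty$ both sides equal $-\infty$; the delicate case is $x_i = \infty$, where the convention $(-\infty) + \infty = -\infty$ breaks the naive pattern and one must split according to whether the smaller scalar equals $-\infty$.

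Alternatively, I can bypass the monotonicity argument entirely by invoking Proposition~\ref{prop_bracketequiv}: writing $\br{x}{y} = -\left(\bigoplus_{i=1}^n \lbrace x_i \otimes (-y_i) \rbrace\right)$, the inequality $\br{x}{y} \geq 0$ is equivalent (as $x \mapsto -x$ reverses order) to $\bigoplus_{i=1}^n \lbrace x_i \otimes (-y_i) \rbrace \leq 0$, which holds exactly when $x_i \otimes (-y_i) \leq 0$ for every $i$. By Proposition~\ref{prop_completeineq} (with $0$ in place of $b$, $x_i$ in place of $a$ and $y_i$ in place of $c$, so that $ab = x_i$ and $-b = 0$) each such inequality is equivalent to $x_i \leq y_i$, that is, to $x \leq y$. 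This route confines the infinite-element bookkeeping to the already-established Propositions and is the version I would ultimately record.
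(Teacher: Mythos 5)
Your proposal is correct, and it actually contains two valid proofs. Your first sketch is essentially the paper's own argument: the paper proves the forward direction exactly as you do ($0x = x \leq y$ puts $0$ in the defining set), and for the converse writes $x = 0x \leq \br{x}{y}\,x \leq y$, which silently uses the two facts you isolate, namely that the maximum defining the bracket is attained and that scaling is monotone in the scalar. Note, though, that the delicate case you flag ($x_i = \infty$ against a smaller scalar equal to $-\infty$) never arises here, since the smaller scalar is $0$, which is finite; so that verification is immediate in every component. The version you say you would ultimately record is a genuinely different route from the paper's: instead of working from the definition of $\br{x}{y}$ as an attained maximum, you convert $\br{x}{y} \geq 0$ via Proposition~\ref{prop_bracketequiv} into the componentwise condition $x_i \otimes (-y_i) \leq 0$ for all $i$, and then apply Proposition~\ref{prop_completeineq} with $b = 0$ to turn each such inequality into $x_i \leq y_i$. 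What this buys you is a proof that is a pure chain of equivalences, with all the $\pm\infty$ bookkeeping delegated to propositions already established, and with no appeal to attainment of the maximum or to monotonicity of scaling; the cost is that it invokes heavier machinery than the paper's direct two-line argument, which needs nothing beyond the definition of the bracket.
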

\begin{proof}
If $x \leq y$ then $0x \leq y$ so by the definition of $\br{x}{y}$ we have
$0 \leq \br{x}{y}$. Conversely, if $\langle x \mid y \rangle \geq 0$ then
$x = 0x \leq \br{x}{y} x \leq y$.
\end{proof}

\begin{proposition}\label{prop_coeffs}
Let $n \geq 1$ and $a, r_1, \dots, r_k \in \olt^n$ be such that 
$a$ lies in the convex hull of $r_1, \dots, r_k$. Then
$$a = \bigoplus_{i=1}^k \langle r_i | a \rangle r_i.$$
\end{proposition}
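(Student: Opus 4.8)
The plan is to establish the claimed equality by antisymmetry of the componentwise order on $\olt^n$, proving the two inequalities $\bigoplus_{i=1}^k \br{r_i}{a} r_i \leq a$ and $a \leq \bigoplus_{i=1}^k \br{r_i}{a} r_i$ separately. The key tool throughout is the residuation property of the scalar product: since scaling by a larger scalar yields a larger vector, the set $\{\lambda : \lambda x \leq y\}$ is downward closed in $\olt$, so the fact (asserted just after the definition of $\br{x}{y}$) that this set attains its maximum gives the equivalence $\lambda x \leq y \iff \lambda \leq \br{x}{y}$ for all $\lambda \in \olt$ and $x, y \in \olt^n$.

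The upper bound is immediate. Taking $\lambda = \br{r_i}{a}$ in the residuation equivalence yields $\br{r_i}{a} r_i \leq a$ for each $i$. Since a join of vectors each lying below $a$ again lies below $a$, taking the join over $i$ gives $\bigoplus_{i=1}^k \br{r_i}{a} r_i \leq a$.

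For the lower bound I would invoke the hypothesis that $a$ lies in the convex hull of $r_1, \dots, r_k$: by the definition of the convex hull there exist coefficients $\mu_1, \dots, \mu_k \in \olt$ with $a = \bigoplus_{i=1}^k \mu_i r_i$. For each $i$ we then have $\mu_i r_i \leq a$, so the residuation equivalence gives $\mu_i \leq \br{r_i}{a}$, and monotonicity of scaling in the scalar yields $\mu_i r_i \leq \br{r_i}{a} r_i$. Taking the join over $i$ and using monotonicity of $\oplus$ gives $a = \bigoplus_{i=1}^k \mu_i r_i \leq \bigoplus_{i=1}^k \br{r_i}{a} r_i$, which together with the upper bound completes the argument.

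I do not expect any serious obstacle here: the proof is a short order-theoretic manipulation. The only point requiring care is that we work over the completed semiring $\olt$, where $\pm\infty$ can behave unexpectedly; phrasing everything through the residuation equivalence together with the monotonicity of scaling and of $\oplus$ sidesteps explicit case analysis on the infinite values, since these monotonicity facts hold uniformly under the conventions fixed for $\olt$.
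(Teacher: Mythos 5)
Your proof is correct and takes essentially the same route as the paper's: both rest on writing $a = \bigoplus_{i=1}^k \mu_i r_i$, deducing $\mu_i \leq \br{r_i}{a}$ from $\mu_i r_i \leq a$, and using the attained-maximum fact $\br{r_i}{a} r_i \leq a$, then concluding by antisymmetry. The only difference is presentational — the paper argues coordinate by coordinate (choosing, for each component $j$, an index $i$ attaining the maximum), whereas you stay at the vector level via monotonicity of scaling and of $\oplus$, which holds in $\olt$ and is used without comment elsewhere in the paper as well.
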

\begin{proof}
For $1 \leq j \leq n$, let $b_j$ be the $j$th component of the right hand
side; we must show that each $a_j = b_j$.
Since $a$ lies in the convex hull of $r_1, \dots, r_k$, it can be written
in the form
$$a = \bigoplus_{i=1}^k \lambda_i r_i$$
for some scalars $\lambda_i \in \olt$. Then for $1 \leq j \leq n$ we
have $a_j = \bigoplus_{i=1}^k \lambda_i (r_i)_j$, so there is an $i$
such that $a_j = \lambda_i (r_i)_j$. Now $\lambda_i r_i \leq a$
so by the definition of $\langle r_i \mid a \rangle$ we have
$\lambda_i \leq \langle r_i | a \rangle$, and so
$a_j = \lambda_i (r_i)_j \leq \langle r_i | a_i \rangle (r_i)_j \leq b_j$.

Conversely, it follows from the definition of $\br{r_i}{a}$ that
$\br{r_i}{a} r_i \leq a$ for all $i$. Thus, $\br{r_i}{a} (r_i)_j \leq a_j$
for all $i$ and $j$, and hence $b_j \leq a_j$ for all $j$.
\end{proof}

We define a distance function on $\olt^n$ by $d_H(x,y) = 0$ if $x$ is a
finite scalar multiple of $y$, and
$$d_H(x,y) = - (\langle x \mid y \rangle \otimes \langle y \mid x \rangle)$$
otherwise.
It is easily verified that the map $d_H$ is invariant under scaling $x$
or $y$ by finite tropical scalars, and hence well-defined on
$\overline{\mathbb{PT}}^{n-1}$. In fact, it is well known that $d_H$ is
an extended metric (that is, a metric which is permitted to take the
value $\infty$) on projective space. We call it the
\textit{(tropical) Hilbert projective metric}.

\begin{proposition}
$d_H$ is an extended metric on $\overline{\mathbb{PT}}^{n-1}$.
\end{proposition}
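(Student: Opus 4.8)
The plan is to verify the four conditions defining an extended metric: symmetry, non-negativity, the identity of indiscernibles, and the triangle inequality (well-definedness on projective space having already been noted). Symmetry is immediate, since the scalar $\br{x}{y}\otimes\br{y}{x}$ is unchanged under interchanging $x$ and $y$ (as $\otimes$ is commutative and the formula is symmetric in its arguments), and the relation ``$x$ is a finite scalar multiple of $y$'' is itself symmetric.

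The single tool that drives everything else is a submultiplicativity (``chain'') inequality for the scalar product, namely
$$\br{x}{y}\otimes\br{y}{z}\;\leq\;\br{x}{z}\qquad(x,y,z\in\olt^n).$$
To prove it I would start from the fact that the defining maximum is attained, so that $\br{x}{y}\,x\leq y$ and $\br{y}{z}\,y\leq z$; scaling the first inequality by $\br{y}{z}$ (scaling is order-preserving on $\olt^n$, even in the presence of $\pm\infty$) and chaining through the second gives $\big(\br{x}{y}\otimes\br{y}{z}\big)x\leq z$, whence the claim follows from the maximality in the definition of $\br{x}{z}$.

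Non-negativity then follows by specialising $z=x$: the chain inequality yields $\br{x}{y}\otimes\br{y}{x}\leq\br{x}{x}$, and whenever $x$ (or, symmetrically, $y$) has a finite entry one checks directly that $\br{x}{x}=0$, so the bracketed product is $\leq 0$ and $d_H(x,y)\geq 0$. For the identity of indiscernibles, the backward direction is the first clause of the definition; for the forward direction I would assume $d_H(x,y)=0$ with $x$ not a finite multiple of $y$, so that $\br{x}{y}\otimes\br{y}{x}=0$. Since a product (i.e.\ $\otimes$-sum) of two elements of $\olt$ is finite only when both factors are finite, both brackets are finite with $\br{y}{x}=-\br{x}{y}$; writing $\lambda=\br{x}{y}$ and combining $\lambda x\leq y$ with the rescaled inequality $y\leq\lambda x$ forces $\lambda x=y$, contradicting the assumption. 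For the triangle inequality I would use that $d_H$ is well-defined on projective space to reduce to the case where $x,y,z$ are pairwise projectively distinct, apply the chain inequality twice to obtain
$$\br{x}{z}\otimes\br{z}{x}\;\geq\;\big(\br{x}{y}\otimes\br{y}{x}\big)\otimes\big(\br{y}{z}\otimes\br{z}{y}\big),$$
and then negate (an order-reversing operation); when $d_H(x,y)$ and $d_H(y,z)$ are finite the negation distributes over the product and gives $d_H(x,z)\leq d_H(x,y)+d_H(y,z)$, while if either term on the right is $\infty$ the inequality is trivial.

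The main obstacle is the bookkeeping forced by $\pm\infty$, and in particular by the exceptional rule $(-\infty)\otimes\infty=-\infty$. Concretely, the scalar-product formula can return a genuinely negative value (namely $-\infty$) for a pair of purely infinite vectors, and it is precisely to suppress this that the definition treats projectively equal pairs by a separate clause. I would therefore isolate the case in which \emph{both} $x$ and $y$ have all entries in $\{-\infty,\infty\}$: a short direct analysis shows that each bracket then lies in $\{-\infty,\infty\}$, that both brackets equal $\infty$ only when $x=y$ (covered by the first clause), and that otherwise at least one bracket equals $-\infty$, making the product $-\infty$ and hence $d_H(x,y)=\infty\geq 0$. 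Checking that order-preservation of scaling and the distribution of negation over $\otimes$ survive these exceptional cases is exactly where care is needed; everything else reduces to the routine finite computations recorded in the preceding propositions.
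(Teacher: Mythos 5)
Your proposal is correct and takes essentially the same route as the paper: the engine of both proofs is the chain inequality $\br{x}{y}\otimes\br{y}{z}\leq\br{x}{z}$, obtained from $\br{x}{y}x\leq y$ and $\br{y}{z}y\leq z$ via order-preserving scaling, followed by negation to get the triangle inequality. The paper proves only the triangle inequality this way and dismisses the remaining axioms as ``readily verified from the definition''; your verification of symmetry, non-negativity, the identity of indiscernibles, and the all-infinite-vector edge cases (where negation fails to distribute over $\otimes$) simply fills in, correctly, what the paper leaves as routine.
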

\begin{proof}
For the triangle inequality, suppose $x, y, z \in \olt^n$. If $x$ and $y$
are finite scalar multiples, or if $y$ and $z$ are finite scalar multiples
then the triangle inequality is trivially satisfied. Otherwise, by
the definition of the bracket we have $\br{x}{y} x \leq y$ and
$\br{y}{z} y \leq z$. Thus,
$$(\br{x}{y} + \br{y}{z}) x = \br{y}{z} \br{x}{y} x \leq \br{y}{z} y \leq z$$
which by the definition of $\br{x}{z}$ means that $\br{x}{y} + \br{y}{z} \leq \br{x}{z}$.
A symmetrical argument shows that $\br{z}{y} + \br{y}{x} \leq \br{z}{x}$, and
combining these we have
\begin{align*}
d_H(x,y) + d_H(y,z) &= - (\br{x}{y}+ \br{y}{x}) - (\br{y}{z} + \br{z}{y}) \\
&= - (\br{x}{y}+ \br{y}{z} + \br{z}{y} + \br{y}{x}) \\
&\geq - (\br{x}{z} + \br{z}{x}) \\
&= d_H(x,z).
\end{align*}
The other conditions are readily verified from the definition.
\end{proof}

Notice that any map $\theta$ from a subspace of $\olt^p$ to a subspace of
$\olt^q$ which either preserves the bracket ($\br{x}{y} = \br{\theta(x)}{\theta(y)}$)
or reverses the bracket ($\br{x}{y} = \br{\theta(y)}{\theta(x)}$) will
preserve the Hilbert projective metric. We call such maps \textit{orientation-preserving}
and \textit{orientation-reversing}, respectively.

\section{Duality}\label{sec_duality}

In this section we begin by providing a brief introduction to tropical
matrix duality, We introduce the notion of an \textit{anti-isomorphism}
of semimodules, establish some basic properties of anti-isomorphisms,
and show that the matrix duality map is an anti-isomorphism.

Let $S = \ft$, $S = \trop$ or $S = \olt$. Let $p, q \geq 1$ and $A$ be a
$p \times q$ matrix over $S$, with rows $A_1, \dots, A_p \in S^{1 \times q}$ and columns $B_1, \dots, B_q \in S^{p \times 1}$.
We define the ($S$-linear) \textit{row space} $R_S(A)$ of $A$ to be the
$S$-linear convex hull of the vectors $A_1, \dots, A_p$.
Dually, the \textit{($S$-linear) column space} $C_S(A)$ is the
$S$-linear convex hull of the vectors $B_1, \dots, B_q$. Since we are most
often interested in the case $S = \olt$, we shall for brevity
write $C(A)$ and $R(A)$ for $C_\olt(A)$ and $R_\olt(A)$ respectively. 

Now treating $A$ as a matrix over $\olt$, we define a map
$\theta_A : R(A) \to C(A)$ by
$$\theta_A(x) = A (-x)^T = \bigoplus_{i = 1}^q (-x_i) B_i$$
where the second equality is immediate from the definition of matrix
multiplication. Notice that, again just using the definition of matrix
multiplication,
the $i$th component of $\theta_A(x)$ is
$$\bigoplus_{j=1}^q \lbrace A_{ij} + (- x_j) \rbrace,$$
which by Proposition~\ref{prop_bracketequiv} is exactly
$- \langle A_i \mid x \rangle$. 

Similarly, we define $\theta_A' : C(A) \to R(A)$ by
$$\theta_A'(y) = (-y)^T A = \bigoplus_{j=1}^p (-y_j) A_j.$$
There is an obvious duality between $\theta_A'$ and $\theta_A$ via the
transpose map; indeed for every $y \in C(A)$ we have $y^T \in R(A^T)$ and
$\theta_A'(y) = (\theta_{A_T}(y^T))^T$. From this, or directly, we may
deduce that the $j$th component of $\theta_A'(y)$ is 
$- \langle B_j \mid y \rangle$.

The map $\theta_A$, which we shall call the \textit{duality map} of $A$,
was studied (in a rather more general axiomatic setting, of which
the completed tropical semiring $\olt$ is a special case) by Cohen,
Gaubert and Quadrat \cite{Cohen04}, who established that it is an antitone
isomorphism of lattices. Its restriction to $\ft$ has also been considered
by Develin and Sturmfels \cite{Develin04}
who observed that it preserves the Euclidean polytope structure of the row
space. We shall need a slight strengthening of these results; we make no claim
of originality in respect of this, since the stronger form can be deduced from
\cite{Cohen04} and is probably essentially known to experts in the field.
However, since \cite{Cohen04} is not very accessible to non-specialists, and
we believe this paper will have a rather broader readership, we include a
direct, elementary combinatorial proof.

We begin with the following elementary property of the duality map.
\begin{proposition}\label{prop_dualitybijection}
For any matrix $A$ over $\olt$, the maps $\theta_A$ and $\theta_A'$ are
mutually inverse bijections between $R(A)$ and $C(A)$. If the entries of
$A$ are all finite then $\theta_A$ and $\theta_A'$ restrict to mutually
inverse bijections between $R_\ft(A)$ and $C_\ft(A)$.
\end{proposition}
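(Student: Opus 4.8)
The plan is to prove directly that the two composites $\theta_A' \circ \theta_A$ and $\theta_A \circ \theta_A'$ are the identity maps on $R(A)$ and $C(A)$ respectively; since the two maps run in opposite directions between these sets, this at once yields that they are mutually inverse bijections. The key observation driving everything is that the scalar coefficients implicit in the duality maps are precisely the bracket values that reconstruct a vector from its generators, as furnished by Proposition~\ref{prop_coeffs}.

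Concretely, I would start from the component formulas already recorded above, namely that the $i$th component of $\theta_A(x)$ is $-\br{A_i}{x}$ and the $j$th component of $\theta_A'(y)$ is $-\br{B_j}{y}$. Taking $x \in R(A)$, I would compute $\theta_A'(\theta_A(x))$ from the defining expression $\theta_A'(y) = \bigoplus_{j=1}^p (-y_j) A_j$ with $y = \theta_A(x)$. Here each coefficient is $-y_j = -(-\br{A_j}{x}) = \br{A_j}{x}$, using that $z \mapsto -z$ is an involution on $\olt$, so the composite becomes $\bigoplus_{j=1}^p \br{A_j}{x} A_j$. Since $x$ lies in the convex hull of the rows $A_1, \dots, A_p$, Proposition~\ref{prop_coeffs} identifies this sum as $x$ itself. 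The evaluation of $\theta_A(\theta_A'(y))$ for $y \in C(A)$ is entirely symmetric: it reduces to $\bigoplus_{i=1}^q \br{B_i}{y} B_i$, which Proposition~\ref{prop_coeffs} (applied to the columns $B_1, \dots, B_q$) returns as $y$. This establishes the first assertion.

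For the finitary statement I would check that, when all entries of $A$ are finite, the two maps restrict to the finitary spans. If $x \in R_\ft(A)$ then $x \in \ft^q$, so each coefficient $-x_i$ is finite; as the columns $B_i$ also have finite entries, $\theta_A(x) = \bigoplus_{i=1}^q (-x_i) B_i$ is a finite $\ft$-linear combination of the $B_i$ and therefore lies in $C_\ft(A)$. The symmetric argument shows $\theta_A'$ carries $C_\ft(A)$ into $R_\ft(A)$. The composite identities proved above restrict to these subspaces, so $\theta_A$ and $\theta_A'$ are mutually inverse bijections between $R_\ft(A)$ and $C_\ft(A)$.

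I do not expect a serious obstacle: once Proposition~\ref{prop_coeffs} is available, the argument is essentially bookkeeping. The only points that demand care are the sign conventions — in particular the cancellation $-(-\br{A_j}{x}) = \br{A_j}{x}$, which relies on $z \mapsto -z$ being a genuine involution on all of $\olt$ — and keeping the two distinct index ranges (rows running up to $p$, columns up to $q$) straight throughout the computation.
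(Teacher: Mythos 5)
Your proposal is correct and follows essentially the same route as the paper's own proof: both reduce the composite maps to $\bigoplus_i \br{r_i}{v} r_i$ via the component formulas and then invoke Proposition~\ref{prop_coeffs}, the only cosmetic difference being that you compute $\theta_A' \circ \theta_A$ explicitly while the paper computes $\theta_A \circ \theta_A'$ and cites duality for the other composite. Your handling of the finitary case (observing directly that $\theta_A(x)$ is an $\ft$-linear combination of the columns) is a slight streamlining of the paper's argument via $C_\ft(A) = C(A) \cap \ft^p$, but it is not a different method.
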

\begin{proof}
Suppose $A$ is a $p \times q$ matrix, and let $B_1, \dots, B_q$ be the columns
of $A$. We claim that $\theta_A \circ \theta_A'$ is the identity
function on $C(A)$. Indeed, given $c \in C(A)$, by the observations
above we have
$$\theta_A'(c) = (- \br{B_1}{c}, \dots, - \br{B_q}{c}).$$
Now using the definition of $\theta_A$ we have
$$\theta_A(\theta_A'(c)) = \bigoplus_i (- (- \br{B_i}{c})) B_i = \bigoplus_i \br{B_i}{c} B_i = c,$$
where the last equality is guaranteed by Proposition~\ref{prop_coeffs}
because $c$ lies in the convex hull of $B_1, \dots, B_q$. A dual argument shows
that $\theta_A' \circ \theta_A$ is the identity function on $R(a)$, which
completes the proof that $\theta_A$ and $\theta_A'$ are mutually inverse
bijections between $R(A)$ and $C(A)$.

Now suppose all entries of $A$ are finite. In this case, it is immediate
from the definition that $\theta_A$ maps finite vectors to finite vectors, so
it maps $R_\ft(A)$ into $C_\ft(A) \cap \ft^p$. But by our observations in
Section~\ref{sec_prelims} we have $C_\ft(A) = C(A) \cap \ft^p$, so $\theta_A$
maps $R_\ft(A)$ into $C_\ft(A)$. A dual argument shows that $\theta_A'$ maps
$C_\ft(A)$ into $R_\ft(A)$. Since $\theta_A$ and $\theta_A'$ are mutually
inverse bijections, it follows that they restrict to mutually inverse
bijections between $C_\ft(A)$ and $R_\ft(A)$.
\end{proof}

Notice that the duality maps $\theta_A$ and $\theta_A'$ are defined for
matrices over $\ft$ or $\olt$, but do not make sense for matrices over
$\mathbb{T}$ because they depend upon the
involution $x \mapsto -x$. Indeed, in the special case that $A$ is the
$1 \times 1$ matrix with single entry $0$, the duality map is exactly
this involution. The core of the proof of our duality theorem is the
following elementary property of the bracket operation.

\begin{lemma}\label{lemma_changecoords}
Suppose $n \geq 1$ and $a, b, r_1, \dots r_k \in \olt^n$. Then
$$\langle a \mid b \rangle \leq \br{\rb{a}}{\rb{b}}$$
with equality provided $a$ and $b$ are contained in the $\olt$-linear
convex hull of $r_1, \dots, r_k$.
\end{lemma}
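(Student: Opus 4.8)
The plan is to reduce both sides to explicit quantities and attack the two assertions separately, since the inequality holds unconditionally while the reverse inequality (which upgrades it to an equality) is exactly where the convexity hypothesis must enter. Throughout I would keep Proposition~\ref{prop_coeffs} in reserve for the equality half, and use the residuation identity $\br{x}{y} x \leq y$ (immediate from the definition of the bracket as a maximum) as the main lever.

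For the inequality itself, I would first observe the transitivity-type estimate $\br{r_j}{a} \otimes \br{a}{b} \leq \br{r_j}{b}$ for each $j$. This is proved exactly as in the triangle-inequality computation for $d_H$: from $\br{r_j}{a}\, r_j \leq a$ and $\br{a}{b}\, a \leq b$ one gets $(\br{r_j}{a}\otimes\br{a}{b})\, r_j \leq \br{a}{b}\, a \leq b$, and the definition of $\br{r_j}{b}$ then forces the claimed bound. Reading these $k$ inequalities together says precisely that the scalar $\br{a}{b}$ satisfies $\br{a}{b}\,\rb{a} \leq \rb{b}$ componentwise, and hence, by the definition of the bracket on $\olt^k$, that $\br{a}{b} \leq \br{\rb{a}}{\rb{b}}$. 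This gives the stated inequality with no hypothesis on $a$ and $b$.

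For the equality, suppose $a$ and $b$ both lie in the convex hull of $r_1,\dots,r_k$, and set $\mu = \br{\rb{a}}{\rb{b}}$; I must show $\mu \leq \br{a}{b}$, which combined with the previous paragraph yields equality. By the definition of $\mu$ we have $\mu \otimes \br{r_j}{a} \leq \br{r_j}{b}$ for every $j$. Proposition~\ref{prop_coeffs} lets me expand $a = \bigoplus_j \br{r_j}{a}\, r_j$ and $b = \bigoplus_j \br{r_j}{b}\, r_j$, whence
$$\mu\, a = \bigoplus_{j=1}^k (\mu \otimes \br{r_j}{a})\, r_j \leq \bigoplus_{j=1}^k \br{r_j}{b}\, r_j = b,$$
using that scaling distributes over $\oplus$ and is monotone (so the $j$th term on the left is dominated by the $j$th term on the right). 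The definition of $\br{a}{b}$ as the largest scalar $\lambda$ with $\lambda a \leq b$ then gives $\mu \leq \br{a}{b}$, as required.

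The one genuinely delicate point, and the step I expect to be the real obstacle, is that all of these manipulations silently assume that the scalar products and the pulling of $\mu$ through the $\oplus$ behave as in ordinary arithmetic. In $\olt$ this can fail because of the anomalous convention $(-\infty)\otimes\infty = -\infty$, so that associativity of scaling and the identity $(\mu \otimes \br{r_j}{a})\, r_j = \mu\,(\br{r_j}{a}\, r_j)$ need not hold once infinite entries are present. To make the argument rigorous I would either carry out the reductions of Proposition~\ref{prop_bracketequiv} to discard the components where $x_i = -\infty$ or $y_i = \infty$ and reduce to the finite case $a, b, r_j \in \ft^n$, where every identity above is transparent, or else dispatch the remaining configurations directly by the same kind of case analysis used in Propositions~\ref{prop_completeineq} and~\ref{prop_bracketequiv}.
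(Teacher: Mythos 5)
Your proof is correct, and its second half coincides with the paper's, but the first half takes a genuinely different route. For the equality under the convexity hypothesis you argue exactly as the paper does: expand $a = \bigoplus_j \br{r_j}{a}\, r_j$ and $b = \bigoplus_j \br{r_j}{b}\, r_j$ via Proposition~\ref{prop_coeffs} and push the scalar through the sum (the paper shows that \emph{every} $\lambda$ with $\lambda\,\rb{a} \leq \rb{b}$ satisfies $\lambda a \leq b$; you take the single witness $\mu = \br{\rb{a}}{\rb{b}}$, which is equivalent because the maximum defining the bracket is attained). For the unconditional inequality, however, the paper works from the coordinate formula of Proposition~\ref{prop_bracketequiv} and runs a case analysis according to whether $\br{\rb{a}}{\rb{b}}$ is $\infty$, $-\infty$ or finite, with several sub-cases to control infinite entries. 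You replace all of this by the residuation estimate $\br{r_j}{a} \otimes \br{a}{b} \leq \br{r_j}{b}$ --- the same device the paper uses to prove the triangle inequality for $d_H$ --- and then read off $\br{a}{b}\,\rb{a} \leq \rb{b}$, hence $\br{a}{b} \leq \br{\rb{a}}{\rb{b}}$, directly from the definition of the bracket as a maximum. Your argument is shorter, uniform over the infinite cases, and more conceptual; what the paper's case analysis buys is only coordinate-level self-containedness, at the cost of considerable length.

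One remark on your final paragraph: the ``delicate point'' you flag is not actually an obstacle, so neither the reduction to $\ft^n$ nor a supplementary case analysis is needed. With the convention $(-\infty) \otimes \infty = -\infty$, the operation $\otimes$ on $\olt$ remains associative and commutative and distributes over $\oplus$ (the paper asserts exactly this in Section~\ref{sec_prelims}), and both operations are compatible with the order --- these are precisely the facts the paper itself invokes (``compatibility of the order with $\otimes$'') in the equality half of its own proof. Together with attainment of the maximum defining the bracket (so that $\br{u}{v}\,u \leq v$ always holds, as the paper notes), every identity you use, including $(\mu \otimes \br{r_j}{a})\,r_j = \mu\,(\br{r_j}{a}\,r_j)$, is valid verbatim in $\olt$, and your argument as written is already rigorous.
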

\begin{proof}
Let
$x = \rb{a} \text{ and } y = \rb{b}.$
We first show that $\br{a}{b} \leq \br{x}{y}$. Firstly, if
$\br{x}{y} = \infty$ then there is nothing to prove. Next, suppose
$\br{x}{y} = -\infty$. By Proposition~\ref{prop_bracketequiv} there is
an $i$ such that $x_i (-y_i) = \infty$, which means either
\begin{itemize}
\item[(1)] $x_i = \infty$ and $y_i \neq \infty$; or
\item[(2)] $x_i \neq -\infty$ and $y_i = -\infty$.
\end{itemize}

In case (1), since $\br{r_i}{b} = y_i \neq \infty$,
there exists $j$ such that $(r_i)_j \neq -\infty$ and $b_j \neq \infty$.
But since $\br{r_i}{a} = x_i = \infty$ we must have $a_j = \infty$.
But now $a_j (-b_j) = \infty$ which by Proposition~\ref{prop_bracketequiv}
again ensures that $\br{a}{b} = -\infty$.

In case (2) we have $\br{r_i}{a} = x_i \neq -\infty$ and
$\br{r_i}{b} = y_i = -\infty$. Applying the same argument as above to
the latter, there is a $j$ such that
either
\begin{itemize}
\item[(2A)] $(r_i)_j = \infty$ and $b_j \neq \infty$; or
\item[(2B)] $(r_i)_j \neq -\infty$ and $b_j = -\infty$.
\end{itemize}
In case (2A), since $(r_i)_j = \infty$ but $\br{r_i}{a} \neq -\infty$ we must
have $a_j = \infty$, whereupon $a_j (-b_j) = \infty$. In case (2B), by
a similar argument, we must have $a_j \neq -\infty$ and again $a_j (-b_j) = \infty$,
which by Proposition~\ref{prop_bracketequiv} ensures that $\br{a}{b} = -\infty$.

Now consider the case in which $\br{x}{y}$ is finite. By
Proposition~\ref{prop_bracketequiv} there is an $i$ such that
$\br{x}{y} = -(x_i (-y_i))$. Since $\br{x}{y}$ is finite, $x_i$ and
$y_i$ must be finite, so we have $\br{x}{y} = y_i - x_i = \br{r_i}{b} - \br{r_i}{a}$.
By the same argument, since $\br{r_i}{b} = y_i$ is finite, there is a $j$
such that $\br{r_i}{b} = b_j - (r_i)_j$ with $b_j$ and $(r_i)_j$ finite,
whereupon
\begin{equation}\label{eq1}
b_j = \br{r_i}{b} + (r_i)_j.
\end{equation}
Also, by the definition of the bracket, we have
\begin{equation}\label{eq2}
a_j \geq \br{r_i}{a} + (r_i)_j.
\end{equation}
Since all terms in \eqref{eq1} and \eqref{eq2} (with the possible exception
of $a_j$ which may be $\infty$) are known to be finite, we may
subtract \eqref{eq1} from \eqref{eq2} to obtain
$$a_j - b_j \geq \br{r_i}{a} + (r_i)_j - (r_i)_j - \br{r_i}{b} = \br{r_i}{a} - \br{r_a}{b} = x_i - y_i.$$
But now by Proposition~\ref{prop_bracketequiv} we have
$$\br{a}{b} = - \bigoplus_{k=1}^n \lbrace a_k (- b_k) \rbrace \leq -(a_j - b_j) \leq -(x_i - y_i) = \br{x}{y}.$$

It remains to show that $\br{x}{y} \leq \br{a}{b}$ under the assumption
that $a$ and $b$ lie in the convex hull of the vectors $r_1, \dots, r_k$.
Under this assumption, Proposition~\ref{prop_coeffs} ensures that
$$a = \bigoplus_{i=1}^k \br{r_i}{a} r_i \text{ and } b = \bigoplus_{i=1}^k \br{r_i}{b} r_i.$$
Suppose $\lambda \in \olt$ is such that $\lambda x \leq y$. Then by definition
$\lambda x_i \leq y_i$ for all $i$, that is, 
$\lambda \br{r_i}{a} \leq \br{r_i}{b}$ for all $i$. Using the compatibility
of the order with $\otimes$ it follows that $\lambda \br{r_i}{a} r_i \leq \br{r_i}{b} r_i$ for all $i$
and hence using the compatibility of the order with $\oplus$ and distributivity
of $\otimes$ over $\oplus$ that
$$\lambda a = \lambda \bigoplus_{i=1}^k \br{r_i}{a} r_i = \bigoplus_{i=1}^k \lambda \br{r_i}{a} r_i \leq \bigoplus_{i=1}^k \br{r_i}{b} r_i = b.$$
We have shown that $\lbrace \lambda \mid \lambda x \leq y \rbrace \subseteq \lbrace \lambda \mid \lambda a \leq b \rbrace$
and so
$$\br{x}{y} = \max \lbrace \lambda \mid \lambda x \leq y \rbrace
\leq \max \lbrace \lambda \mid \lambda a \leq b \rbrace  = \br{a}{b}.$$
\end{proof}

Lemma~\ref{lemma_changecoords} is the key ingredient for our new formulation
of tropical matrix duality:

Let $X \subseteq \olt^n$ and $Y \subseteq \olt^m$ be convex sets. We
say that a function $\theta : X \to Y$ is an \textit{anti-morphism}
if
\begin{itemize}
\item for all $x, y \in X$, we have $\br{x}{y} = \br{\theta(y)}{\theta(x)}$; and
\item for all $x \in X$ and $\lambda \in \ft^n$ we have $\theta(\lambda x) = (-\lambda) \theta(x)$.
\end{itemize}
Notice that an anti-morphism is required to preserve scaling only by
\textit{finite} scalars and that, by Proposition~\ref{prop_orderbracket},
an anti-morphism must be order-reversing.
A bijective anti-morphism is called an \textit{anti-isomorphism}; an
anti-isomorphism is in particular an antitone lattice morphism.
Notice that the inverse of an anti-isomorphism is necessarily an
anti-isomorphism. Two sets are termed \textit{anti-isomorphic} if there is
an anti-isomorphism between them.

\begin{lemma}\label{lemma_composeantis}
Let $S = \olt$ or $S = \ft$, let $X \subseteq S^i$, $Y \subseteq S^j$
and $Z \subseteq S^k$
be convex sets, and suppose $\theta_1 : X \to Y$ and $\theta_2 : Y \to Z$
are anti-isomorphisms between convex sets.
Then the composition $\theta_2 \circ \theta_1$ is a linear isomorphism of
semimodules.
\end{lemma}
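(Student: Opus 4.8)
The plan is to show that $\phi := \theta_2 \circ \theta_1 : X \to Z$ is a bijection which preserves the bracket and all scaling, and then to extract preservation of $\oplus$ (and of the infinite scalars, when $S = \olt$) from the underlying lattice structure. Bijectivity is immediate, since $\phi$ is a composite of bijections.

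First I would verify directly that $\phi$ preserves the bracket and finite scaling, the two bracket-reversals (respectively sign-changes) of $\theta_1$ and $\theta_2$ cancelling. For the bracket, given $x, y \in X$, I apply the defining identity of $\theta_2$ to the pair $\theta_1(x), \theta_1(y) \in Y$ to obtain $\br{\theta_2(\theta_1(x))}{\theta_2(\theta_1(y))} = \br{\theta_1(y)}{\theta_1(x)}$, and then the defining identity of $\theta_1$ rewrites the right-hand side as $\br{x}{y}$, giving $\br{\phi(x)}{\phi(y)} = \br{x}{y}$. For scaling, given $\lambda \in \ft$, I compute $\phi(\lambda x) = \theta_2((-\lambda)\theta_1(x)) = \lambda\, \theta_2(\theta_1(x)) = \lambda \phi(x)$. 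Thus $\phi$ is an orientation-preserving bijection that respects finite scaling.

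Next I would promote bracket-preservation to the statement that $\phi$ is an order-isomorphism. By Proposition~\ref{prop_orderbracket}, $x \leq y$ holds exactly when $\br{x}{y} \geq 0$; since $\phi$ preserves the bracket and is bijective, both $\phi$ and $\phi^{-1}$ are order-preserving, so $\phi$ is an isomorphism of the underlying posets. Each convex set is a sublattice of the ambient $S^n$ in which the binary join of $x$ and $y$ is precisely $x \oplus y$: it lies in the set by closure under $\oplus$, and it dominates any common upper bound. An order-isomorphism carries existing joins to existing joins, so $\phi(x \oplus y) = \phi(x) \oplus \phi(y)$. Together with finite-scaling preservation, this already shows that $\phi$ is a linear isomorphism of semimodules in the case $S = \ft$.

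It remains to treat the infinite scalars when $S = \olt$, and this is the only genuinely delicate point: one must bridge from the bracket (a residuation, hence order-theoretic, datum) to the full semimodule structure. The vector $(-\infty)x$ equals the bottom element of $S^n$ in every coordinate, hence the least element of each convex set, and an order-isomorphism preserves least elements, so $\phi((-\infty)x) = (-\infty)\phi(x)$. For $\infty$ I would observe that $\infty x = \bigoplus_{\lambda \in \ft} \lambda x$ as a (possibly infinite) join, which lies in the convex set by closure under scaling by $\infty$; since an order-isomorphism preserves every join that exists, $\phi(\infty x) = \bigoplus_{\lambda\in\ft}\phi(\lambda x) = \bigoplus_{\lambda\in\ft}\lambda\,\phi(x) = \infty\,\phi(x)$. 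Hence $\phi$ preserves $\oplus$ and all scaling, completing the proof that it is a linear isomorphism of semimodules. I expect the main care to be in justifying that $\oplus$ and $\infty$-scaling are realised as joins inside the convex sets, after which their preservation follows formally from $\phi$ being an order-isomorphism.
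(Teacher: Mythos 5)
Your proposal is correct and takes essentially the same route as the paper: show the composite is an order (lattice) isomorphism, recover $\oplus$ as the binary join inside the convex sets, check finite scaling by direct computation, and handle the $\olt$ case via the bottom element for $(-\infty)$-scaling and the identity $\infty x = \sup\lbrace \lambda x \mid \lambda \in \ft \rbrace$ for $\infty$-scaling. The only cosmetic difference is that you re-derive the order-isomorphism property of the composite from cancellation of the two bracket reversals together with Proposition~\ref{prop_orderbracket}, whereas the paper simply composes the two antitone lattice isomorphisms.
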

\begin{proof}
Since $\theta_1$ and $\theta_2$ are antitone lattice isomorphisms, their
composition is certainly a lattice isomorphism. The fact that $\theta_2 \circ \theta_1$
respects addition now follows from the fact that addition can be defined
in terms of the lattice order in $X$ and $Z$. Indeed, since $X$ is convex, for
any elements $x, y \in X$, $x \oplus y$ is the least upper bound of $x$
of $y$ in the lattice order on $X$. Since $\theta_2 \circ \theta_1$ is a
lattice isomorphism it follows that $\theta_2(\theta_1(x \oplus y))$ is
the least upper bound of $\theta_2(\theta_1(x))$ and $\theta_2(\theta_1(y))$
in the lattice order on $Z$, which since $Z$ is convex is exactly
$\theta_2(\theta_1(x)) \oplus \theta_2(\theta_1(y))$.

Also, for any finite $\lambda$ we
have
$$\theta_2(\theta_1(\lambda x)) = \theta_2((-\lambda) \theta_1(x)) = \lambda \theta_2( \theta_1(x))$$
so the composition $\theta_2 \circ \theta_1$ preserves scaling by finite
scalars. In the case $S = \ft$, this completes the proof.

In the case $S = \olt$, we must show also that $\theta_2 \circ \theta_1$
preserves scaling by $-\infty$ and $\infty$. Let $z_i$ and $z_k$
denote the zero vectors in $\olt^i$ and $\olt^k$ respectively.
Since $X$, $Y$ and $Z$ are convex, each contains the
zero vector of the appropriate dimension. Indeed, each must have the zero
vector as its bottom element, which since $\theta_2 \circ \theta_1$
is a lattice isomorphism means that $\theta_2(\theta_1(z_i)) = z_k$.
Thus we have 
$$\theta_2(\theta_1((-\infty) x)) = \theta_2(\theta_1(z_i)) = z_k = (-\infty) \theta_2(\theta_1(x))).$$
It remains only to show that $\theta_2 \circ \theta_1$ preserves scaling by
$\infty$. Notice that for any vector $x \in \olt^n$ we have
$$\infty x = \sup \lbrace \lambda x \mid \lambda \in \ft \rbrace.$$
The supremum here is by definition taken in $\olt^n$, but if $x$ lies in
some convex set $S \subseteq \olt^n$ then $\infty x$ and $\lambda x$ for
all $\lambda \in \ft$ also lies in $S$, and it follows that we may take
the supremum in $S$. 
Now since $\theta_2 \circ \theta_1$ is a lattice isomorphism of convex sets,
it preserves suprema within $X$. Since it also preserves scaling by finite
scalars, we have:
\begin{align*}
\theta_2(\theta_1(\infty x)) &= \theta_2(\theta_1(\sup \lbrace \lambda x \mid \lambda \in \ft \rbrace)) \\
&= \sup \lbrace \theta_2(\theta_1(\lambda x)) \mid \lambda \in \ft \rbrace \\
&= \sup \lbrace \lambda \theta_2(\theta_1(x)) \mid \lambda \in \ft \rbrace \\
&= \infty \theta_2(\theta_1(x)).
\end{align*}
\end{proof}

\begin{theorem}[Algebraic Duality Theorem]\label{thm_duality}
Let $A$ be a matrix over $\olt$. Then the map $\theta_A : R(A) \to C(A)$
is an anti-isomorphism between $R(A)$ and $C(A)$. If all entries of $A$ are
finite then $\theta_A$ restricts to an
anti-isomorphism between $R_\ft(A)$ and $C_\ft(A)$.
\end{theorem}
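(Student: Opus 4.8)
The plan is to verify directly the two defining properties of an anti-morphism for $\theta_A$, and then invoke Proposition~\ref{prop_dualitybijection} to upgrade this to an anti-isomorphism. Bijectivity is already in hand from that proposition, so the work lies entirely in checking the scaling identity and the bracket-reversal identity.

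For the scaling property, I would argue straight from the definition $\theta_A(x) = A(-x)^T$. For a finite scalar $\lambda$ we have $-(\lambda x) = (-\lambda)(-x)$ componentwise, and since multiplication by a finite scalar distributes through the matrix product, it follows that $\theta_A(\lambda x) = A((-\lambda)(-x))^T = (-\lambda)\,\theta_A(x)$. Restricting attention to finite scalars neatly sidesteps the awkward interaction of $\infty$ and $-\infty$ in $\olt$, so this step is routine.

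The heart of the matter is the bracket-reversal identity $\br{x}{y} = \br{\theta_A(y)}{\theta_A(x)}$ for $x, y \in R(A)$, and this is exactly where Lemma~\ref{lemma_changecoords} is designed to be used. Recall from the discussion preceding Proposition~\ref{prop_dualitybijection} that the $i$th component of $\theta_A(x)$ is $-\br{A_i}{x}$, so that $-\theta_A(x) = (\br{A_1}{x}, \dots, \br{A_p}{x})$ and likewise for $y$. Applying Proposition~\ref{prop_bracketsignchange} gives
$$\br{\theta_A(y)}{\theta_A(x)} = \br{-\theta_A(x)}{-\theta_A(y)} = \br{(\br{A_1}{x}, \dots, \br{A_p}{x})}{(\br{A_1}{y}, \dots, \br{A_p}{y})}.$$
I would then apply Lemma~\ref{lemma_changecoords} with the vectors $r_1, \dots, r_k$ taken to be the rows $A_1, \dots, A_p$ and with $a = x$, $b = y$; the right-hand side above is precisely $\br{\rb{x}}{\rb{y}}$ in the notation of that lemma, which yields $\br{x}{y} \leq \br{\theta_A(y)}{\theta_A(x)}$ in general. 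The crucial point is the equality clause: since $x, y \in R(A)$, they lie in the convex hull of $A_1, \dots, A_p$, so the hypothesis for equality is met and we obtain $\br{x}{y} = \br{\theta_A(y)}{\theta_A(x)}$ exactly. This, together with the scaling property and bijectivity, shows $\theta_A$ is an anti-isomorphism. I expect this equality step to be the main obstacle, in the sense that it is the only place where genuine content rather than bookkeeping is required: everything hinges on recognising that membership in the row space is exactly the hypothesis needed to sharpen the lemma's inequality to an equality.

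Finally, for the case of finite entries, I would observe that Proposition~\ref{prop_dualitybijection} already provides that $\theta_A$ and $\theta_A'$ restrict to mutually inverse bijections between $R_\ft(A)$ and $C_\ft(A)$. Since $R_\ft(A) \subseteq R(A)$, the bracket-reversal and finite-scaling identities established above hold \emph{a fortiori} on $R_\ft(A)$, so the restriction is again an anti-isomorphism, completing the proof.
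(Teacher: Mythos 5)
Your proposal is correct and takes essentially the same route as the paper's own proof: bijectivity is quoted from Proposition~\ref{prop_dualitybijection}, the bracket-reversal identity comes from Lemma~\ref{lemma_changecoords} with the equality clause triggered by membership of $x,y$ in the convex hull of the rows, combined with Proposition~\ref{prop_bracketsignchange}, and the finite-scaling identity is a direct computation. The only (harmless) differences are that you apply the sign-change proposition before rather than after the lemma, and that you spell out the finitary restriction, which the paper leaves implicit.
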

\begin{proof}
By Proposition~\ref{prop_dualitybijection}, $\theta_A$ is a bijection
from $R(A)$ to $C(A)$. Suppose $a, b \in R(A)$, and let $A_1, \dots, A_p$ be the rows of $A$. Then by definition, $a$ and $b$ lie in the convex hull of the $A_i$'s.
Using the definition of $\theta_A$, Lemma~\ref{lemma_changecoords} says
exactly that $\br{a}{b} = \br{-\theta_A(a)}{-\theta_A(b)}$. But
by Proposition~\ref{prop_bracketsignchange} we have 
$\br{-\theta_A(a)}{-\theta_A(b)} = \br{\theta_A(b)}{\theta_A(a)}$. 

Next, suppose the columns of $A$
are $B_1, \dots, B_q$ and let $x \in R(A)$ and $\lambda \in \ft$. Then
$-\lambda \in \ft$ and we have
$$\theta_A(\lambda x) = \bigoplus_{i = 1}^q (-(\lambda x)_i) B_i
 = \bigoplus_{i = 1}^q (-\lambda) (- x_i) B_i =
 (-\lambda) \bigoplus_{i = 1}^q (- x_i) B_i = (-\lambda) \theta_A(x).$$
\end{proof}

We shall see later (Theorem~\ref{thm_dgeometry} below)
that the existence of an anti-isomorphism between finitely generated
convex sets $X$ and $Y$ is a sufficient, as well as a necessary, condition
for $X$ and $Y$ to be the row space and column space of a matrix over
$\ft$ or $\olt$.

As a corollary of Theorem~\ref{thm_duality}, we obtain a special case of the 
theorem of Cohen, Gaubert and Quadrat \cite{Cohen04}.
\begin{theorem}[Lattice Duality Theorem \cite{Cohen04}]\label{thm_latticeduality}
Let $A$ be a matrix over $\olt$. Then the duality
maps $\theta_A$ and $\theta_A'$ are mutually inverse antitone lattice
isomorphisms between $R(A)$ and $C(A)$. If $A$ has all entries finite
then $\theta_A$ and $\theta_A'$ restrict to mutually inverse antitone
lattice isomorphisms between $R_\ft(A)$ and $C_\ft(A)$.
\end{theorem}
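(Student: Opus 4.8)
The plan is to deduce this theorem as an essentially formal consequence of the Algebraic Duality Theorem (Theorem~\ref{thm_duality}) together with the order--bracket correspondence of Proposition~\ref{prop_orderbracket}. The substantive work has already been carried out in establishing that $\theta_A$ is an anti-isomorphism; what remains is to translate that property into the language of antitone lattice isomorphisms, and to identify $\theta_A'$ as the relevant inverse, so I expect the proof to be short.

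First I would record that $\theta_A$ is order-reversing. By Theorem~\ref{thm_duality}, $\theta_A : R(A) \to C(A)$ is an anti-isomorphism, so for all $a, b \in R(A)$ we have $\br{a}{b} = \br{\theta_A(b)}{\theta_A(a)}$. Combining this with Proposition~\ref{prop_orderbracket} gives
$$a \leq b \iff \br{a}{b} \geq 0 \iff \br{\theta_A(b)}{\theta_A(a)} \geq 0 \iff \theta_A(b) \leq \theta_A(a),$$
so $\theta_A$ reverses the order, and the equivalences run in both directions. Since $\theta_A$ is also a bijection by Proposition~\ref{prop_dualitybijection}, it is an order anti-isomorphism of the posets $(R(A), \leq)$ and $(C(A), \leq)$.

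Next I would pass to the lattice structure. As noted in the discussion following the definition of anti-isomorphism, an anti-isomorphism is in particular an antitone lattice isomorphism: each of $R(A)$ and $C(A)$ is a lattice under $\leq$, with join given by $\oplus$ and bottom element the zero vector, exactly as used in the proof of Lemma~\ref{lemma_composeantis}, and a bijective order anti-isomorphism between lattices necessarily carries joins to meets and meets to joins. Hence $\theta_A$ is an antitone lattice isomorphism from $R(A)$ to $C(A)$. By Proposition~\ref{prop_dualitybijection}, $\theta_A'$ is the two-sided inverse of $\theta_A$, and since the inverse of an anti-isomorphism is again an anti-isomorphism, $\theta_A'$ is likewise an antitone lattice isomorphism. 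This establishes the first assertion.

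For the finite case the same argument applies verbatim: Theorem~\ref{thm_duality} supplies the restriction of $\theta_A$ to an anti-isomorphism between $R_\ft(A)$ and $C_\ft(A)$, and Proposition~\ref{prop_dualitybijection} supplies the fact that $\theta_A$ and $\theta_A'$ restrict to mutually inverse bijections there, so both restrictions are again mutually inverse antitone lattice isomorphisms. The only point requiring a little care is the passage from \emph{order} anti-isomorphism to \emph{lattice} anti-isomorphism, namely the verification that the relevant convex sets really are lattices so that finite joins and meets are interchanged; but this is standard and is already implicit in the material preceding the statement, so I anticipate no genuine obstacle.
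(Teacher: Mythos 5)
Your proposal is correct and takes essentially the same route as the paper's own proof: Proposition~\ref{prop_dualitybijection} supplies the mutually inverse bijections (and their restrictions in the finite-entry case), and order-reversal follows by combining the bracket-reversal property from Theorem~\ref{thm_duality} with Proposition~\ref{prop_orderbracket}, exactly as in your chain of equivalences. The only difference is cosmetic: the paper leaves the standard passage from bijective order anti-isomorphism to antitone lattice isomorphism implicit, whereas you spell it out.
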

\begin{proof}
By Proposition~\ref{prop_dualitybijection}, $\theta_A$ and $\theta_A'$ are
mutually inverse bijections between $R(A)$ and $C(A)$, and restrict to
mutually inverse bijections between $R_\ft(A)$ and $C_\ft(A)$ where appropriate.
Hence, it will suffice
to show that $\theta_A$ is order-reversing.
For any $x, y \in R(A)$, by Theorem~\ref{thm_duality}
we have $\langle x \mid y \rangle = \langle \theta_A(y) \mid \theta_A(x) \rangle$.
Thus, using Proposition~\ref{prop_orderbracket} twice, $x \leq y$ if and only
if $\langle x \mid y \rangle = \langle \theta_A(y) \mid \theta_A(x) \rangle \geq 0$,
which holds exactly if $\theta_A(y) \leq \theta_A(x)$.
\end{proof}

Another immediate consequence of Theorem~\ref{thm_duality}, which
does not seem to have been previously noted, is that the duality map
induces an isometry (with respect to the Hilbert metric) between the
projective row space and projective column space of a tropical
matrix.

\begin{theorem}[Metric Duality Theorem]\label{thm_metricduality}
Let $A$ be a tropical matrix.  Then the duality maps $\theta_A$ and
$\theta_A'$ induce mutually inverse isometries
(with respect to the Hilbert
projective metric) between the projectivisations of $R(A)$ and $C(A)$. If
the entries of $A$ are all finite then their restrictions induce mutually
inverse isometries (with respect to the Hilbert projective metric) between
the projectivisations of $R_\ft(A)$ and $C_\ft(A)$.
\end{theorem}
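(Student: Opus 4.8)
The plan is to deduce the theorem from the Algebraic Duality Theorem (Theorem~\ref{thm_duality}) together with the observation recorded at the end of Section~\ref{sec_prelims}, namely that any orientation-reversing map preserves the Hilbert projective metric. Theorem~\ref{thm_duality} asserts precisely that $\theta_A$ reverses the bracket, that is, $\br{x}{y} = \br{\theta_A(y)}{\theta_A(x)}$ for all $x, y \in R(A)$, so $\theta_A$ is orientation-reversing in the terminology introduced there and the essential metric content is already in hand. Consequently the remaining work is the bookkeeping required to pass to the projectivisations and to reconcile the two clauses in the definition of $d_H$.

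First I would check that $\theta_A$ descends to a well-defined map on projectivisations. The scaling clause of the anti-morphism property gives $\theta_A(\lambda x) = (-\lambda)\theta_A(x)$ for finite $\lambda$, so $\theta_A$ carries each finite-scalar-multiple class to a finite-scalar-multiple class and hence induces a map $\overline{\theta_A}$ on projective space; the same holds for $\theta_A'$. By Proposition~\ref{prop_dualitybijection} the maps $\theta_A$ and $\theta_A'$ are mutually inverse, and therefore so are the induced maps $\overline{\theta_A}$ and $\overline{\theta_A'}$.

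To verify that $\overline{\theta_A}$ is an isometry I would split according to the definition of $d_H$. If $x$ is a finite scalar multiple of $y$, then by the scaling clause $\theta_A(x)$ is a finite scalar multiple of $\theta_A(y)$ and both distances vanish. Otherwise neither pair consists of finite scalar multiples: if $\theta_A(x) = \mu\,\theta_A(y)$ with $\mu$ finite, then applying $\theta_A'$ (which, being an anti-isomorphism, also respects finite scaling) would force $x = (-\mu)y$, a contradiction. In this remaining case one computes directly from bracket-reversal that $\br{\theta_A(x)}{\theta_A(y)} = \br{y}{x}$ and $\br{\theta_A(y)}{\theta_A(x)} = \br{x}{y}$, so by commutativity of $\otimes$ the defining product $-(\br{x}{y} \otimes \br{y}{x})$ is unchanged and $d_H$ is preserved. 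The statement for $\theta_A'$ then follows because the inverse of an isometry is an isometry, and the finite-entry case is identical, using the restriction of $\theta_A$ to an anti-isomorphism between $R_\ft(A)$ and $C_\ft(A)$ supplied by Theorem~\ref{thm_duality}.

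The main obstacle, such as it is, is purely one of case analysis: one must ensure that the two clauses in the definition of $d_H$ correspond to one another under $\theta_A$, which is exactly the point where the fact that both $\theta_A$ and its inverse $\theta_A'$ preserve finite scaling becomes essential. Beyond this, the argument is a short and direct consequence of Theorem~\ref{thm_duality}.
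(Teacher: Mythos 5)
Your proposal is correct and follows essentially the same route as the paper's proof: mutual inverseness from Proposition~\ref{prop_dualitybijection}, well-definedness on the projectivisations from the finite-scaling clause of Theorem~\ref{thm_duality}, and preservation of $d_H$ from bracket-reversal via the orientation-reversing observation at the end of Section~\ref{sec_prelims}. The only difference is that you explicitly carry out the case analysis reconciling the two clauses in the definition of $d_H$ (showing $\theta_A(x)$ is a finite scalar multiple of $\theta_A(y)$ exactly when $x$ is one of $y$), a detail the paper leaves implicit in that observation rather than a genuinely different argument.
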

\begin{proof}
By Proposition~\ref{prop_dualitybijection}, $\theta_A$ and $\theta_A'$ are
mutually inverse bijections between $R(A)$ and $C(A)$. By
Theorem~\ref{thm_duality} they map finite scalings to finite scalings, and
hence induce well-defined maps on the respective projective spaces.
Also by Theorem~\ref{thm_duality}, they reverse the bracket operation, and hence
by the observations at the end of
Section~\ref{sec_prelims}, they preserves the distance function.
\end{proof}

\section{Green's Relations}\label{sec_greens}

In this section we briefly recall the definitions of Green's relations;
for a more detailed introduction we refer the reader to one of the
introductory texts on semigroup theoery, such as \cite{Clifford61}. We
then prove some foundational results about Green's relations in tropical
matrix semigroups.

Let $S$ be any semigroup. We denote that $S^1$ the monoid obtained by
adjoining an extra identity element $1$ to $S$. We define a pre-order (a reflexive, transitive
binary relation) $\leq_\GreenR$ on $S$ by $a \leq_\GreenR b$ if there
exists $c \in S^1$ such that $bc = a$, that is, if $a$ lies in the principle
right ideal $bS^1$ generated by $b$, or equivalently, if $aS^1 \subseteq bS^1$.
We define an equivalence relation $\GreenR$
on $S$ by $a \GreenR b$ if $a \leq b$ and $b \leq a$, that is, if $aS^1 = bS^1$.
The pre-order $\leq_\GreenR$ thus induces a partial order on the set of
equivalence classes of $\GreenR$.

Dually, we define $a \leq_\GreenL b$ if $S^1a \subseteq S^1b$, and $a \GreenL b$
if $S^1a = S^1b$. Similarly, we let $a \leq_\GreenJ b$ if $S^1aS^1 \subseteq S^1bS^1$
and $a \GreenJ b$ if $S^1aS^1 = S^1bS^1$. We let $\GreenH$ be the intersection of
$\GreenL$ and $\GreenR$ (so $a \GreenH b$ if $a \GreenL b$ and $a \GreenR b$)
and $\GreenD$ be the smallest equivalence relation containing both $\GreenL$
and $\GreenR$. It is well known and easy to show (see for example \cite{Clifford61})
that $a \GreenD b$ if and only if there exists $c \in S$ with
$a \GreenL c$ and $c \GreenR b$ (or dually, if and only if there exists
$d \in S$ with $a \GreenR d$ and $d \GreenL b$).

We begin with an elementary description of the relations $\GreenR$ and
$\GreenL$ for full matrix semigroups over our tropical semirings, and
indeed over a wider class of semirings. We say that
a commutative semiring $S$ has \textit{local zeros} if for every finite set
$X \subseteq S$ there exists an element $z \in S$ such that $z+x = x$ for
all $x \in X$. The semirings $\ft$, $\tropn$ and $\olt$ all have local zeros,
since given any finite set $X$ if elements it suffices to choose $z$ to be
any element smaller than those in $X$. (In the latter two cases one may of
course always choose $\infty$). The following is a generalisation of a
result proved for $\tropn$ in \cite{K_tropicalgreen}.

\begin{proposition}\label{prop_landr}
Let $A$ and $B$ be elements of a full matrix semigroup over a commutative
semiring with multiplicative identity and local zeros. Then
\begin{itemize}
\item[(i)] $A \leq_R B$ if and only if $C(A) \subseteq C(B)$;
\item[(ii)] $A \GreenR B$ if and only if $C(A) = C(B)$;
\item[(iii)] $A \leq_L B$ if and only if $R(A) \subseteq R(B)$;
\item[(iv)] $A \GreenL B$ if and only if $R(A) = R(B)$;
\item[(v)] $A \GreenH B$ if and only if $C(A) = C(B)$ and $R(A) = R(B)$.
\end{itemize}
\end{proposition}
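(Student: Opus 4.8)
The plan is to prove (i) and (iii) directly from the definitions, deduce (ii) and (iv) by applying these to both directions of the defining pre-order, and obtain (v) by intersecting $\GreenR$ and $\GreenL$. I would begin with the forward implication of (i). Suppose $A \leq_{\GreenR} B$. If $A = B$ there is nothing to prove, so assume $A = BC$ for some matrix $C$. Writing $C_j$ for the $j$th column of $C$, the $j$th column of $A$ is $A_j = B C_j = \bigoplus_i (C_j)_i B_i$, a linear combination of the columns $B_i$ of $B$, hence an element of $C(B)$. Since $C(A)$ is the convex hull of the columns $A_j$ and $C(B)$ is a convex set containing each $A_j$, we conclude $C(A) \subseteq C(B)$.

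The reverse implication of (i) is where the local zeros hypothesis does its work, and I expect it to be the main obstacle. Suppose $C(A) \subseteq C(B)$, so that each column $A_j$ lies in $C(B)$ and can be written $A_j = \bigoplus_{i \in I_j} \lambda_{ij} B_i$ for some set $I_j$ of column indices and scalars $\lambda_{ij} \in S$. To witness $A \leq_{\GreenR} B$ I must produce a single matrix $C$ over $S$ with $BC = A$; since every entry of $C$ must be a genuine element of $S$, I cannot simply discard the unused columns, and in the absence of an additive zero (as in $\ft$) I cannot set their coefficients to zero. Here I invoke local zeros to choose, for each $j$, a scalar $z_j \in S$ that is harmless in the sense that $z_j B_{\ell i} \oplus A_{\ell j} = A_{\ell j}$ for every $\ell$ and every $i \notin I_j$. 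Granting such $z_j$, I set $C_{ij} = \lambda_{ij}$ for $i \in I_j$ and $C_{ij} = z_j$ otherwise; then $(BC)_{\ell j} = \bigoplus_i B_{\ell i} C_{ij} = A_{\ell j} \oplus \bigoplus_{i \notin I_j} z_j B_{\ell i} = A_{\ell j}$, so $BC = A$ as required. The existence of a suitable $z_j$ is the one genuinely semiring-theoretic point: for the tropical semirings it is immediate, since any scalar chosen below all the relevant entries remains absorbed even after multiplication, while in general it is precisely what the local zeros hypothesis is designed to supply.

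Item (ii) then follows formally from (i): $A \GreenR B$ means $A \leq_{\GreenR} B$ and $B \leq_{\GreenR} A$, which by (i) is equivalent to $C(A) \subseteq C(B)$ and $C(B) \subseteq C(A)$, that is, $C(A) = C(B)$. For (iii) and (iv) I would run the transposed argument. Since $A \leq_{\GreenL} B$ holds exactly when $A = CB$ (or $A = B$), and the rows of $CB$ are linear combinations of the rows of $B$, the reasoning above applied to rows rather than columns gives the forward direction, while the converse again constructs a coefficient matrix and pads the unused rows using local zeros in exactly the same fashion; more economically, one may appeal to the identities $R(A) = C(A^T)$ and $A = CB \iff A^T = B^T C^T$ to reduce (iii) and (iv) to (i) and (ii) for transposed matrices. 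Finally (v) is immediate, since $\GreenH = \GreenR \cap \GreenL$: by (ii) and (iv), $A \GreenH B$ holds precisely when $C(A) = C(B)$ and $R(A) = R(B)$.
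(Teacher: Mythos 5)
Your proposal is correct and follows essentially the same route as the paper: the forward half of (i) by observing that the columns of $BC$ lie in $C(B)$, the converse by padding a subset linear combination out to a combination of all columns of $B$ using local zeros, and then (ii)--(v) deduced formally from (i) and its dual (iii). The only point to watch is your parenthetical justification in the tropical case: the padding scalar must be chosen below the differences $A_{\ell j} - B_{\ell i}$, not merely below all the relevant entries; this is the very point the paper itself glosses over, and your explicit isolation of the required absorption condition $z_j B_{\ell i} \oplus A_{\ell j} = A_{\ell j}$ is if anything more precise than the paper's treatment.
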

\begin{proof}
It clearly suffices to show (i). Indeed, (iii) is dual to (i), (ii)
and (iv) follow from (i) and (ii) respectively, and (v) follows
from (ii) and (iv).

Suppose, then, that $A \leq_\GreenR B$, then by definition there is a matrix
$X \in S^{n \times n}$ such that $BX = A$. Now, since
the columns of $BX$ are contained in $C(B)$ it follows that
$C(BX) = C(A) \subseteq C(B)$.

Conversely, suppose $C(A) \subseteq C(B)$. Since the semiring has a
multiplicative identity, every column of $A$ is a linear combination of
a subset of the columns of $A$, and hence lies in $C(A)$ and also in
$C(B)$. Thus, every column of $A$ can be written as a linear combination
of some subset of the columns of $B$. But since the semiring has local
zeros, this means it can be written as a linear combination of all of the
columns of $B$, which means exactly that there exists $X \in S^{n \times n}$ such
that $A = BX$, and so $A \leq_\GreenR B$.
\end{proof}

We next consider the relationship between Green's relations in the respective
full matrix semigroups over the three semirings $\ft$, $\trop$ and $\olt$.
Notice that if $A$ is a matrix over one semiring which is contained in
another, then the row and column space of $A$ depend upon the semiring
over which it is considered. Hence, it is not immediate from
Proposition~\ref{prop_landr} that, for example, two matrices in
$\ft^{n \times n}$ which are $\GreenR$-related in $\olt^{n \times n}$
must also $\GreenR$-related in $\ft^{n \times n}$. However, it transpires
that this is nevertheless the case.

\begin{proposition}\label{prop_rorderinherit1}
Let $A$ and $B$ be matrices in $\ft^{n \times n}$. Then $A \leq_\GreenR B$
in $\ft^{n \times n}$ if and only if $A \leq_\GreenR B$ in $\trop^{n \times n}$.
\end{proposition}
\begin{proof}
Suppose $A \leq_\GreenR B$ in $\tropn$. Then there
is a matrix $P \in \tropn$ such that $A = BP$. Since $B$ and $P$
have finitely many entries, we may choose some finite $\delta \in \ft$
smaller than $b + p - b'$ for every pair of entries $b, b'$ of $B$ and every
finite entry $p$ of $P$. Let $P'$ be obtained from $P$ by replacing every
$-\infty$ entry with $\delta$.

Now let $1 \leq i, j \leq n$. Then
\begin{equation}\label{eq_x}
(BP')_{ij} = \bigoplus_{k=1}^n B_{ik} P'_{kj}
\end{equation}
while
\begin{equation}\label{eq_y}
A_{ij} = (BP)_{ij} = \bigoplus_{k=1}^n B_{ik} P_{kj}.
\end{equation}
Choose $k$ such that $B_{ik} P_{kj}$ is maximum, that is, such that
$B_{ik} P_{kj} = A_{ij}$. Since $A_{ij}$ is finite, we must have
$P_{kj}$ finite, so $P'_{kj} = P_{kj}$ and
$B_{ik} P'_{kj} = B_{ik} P_{kj} = A_{ij}$. Moreover, all the other entries
in the maximum in $\eqref{eq_x}$ are either of the form $B_{ih} P_{hj}$
(which cannot exceed $B_{ik} P_{kj}$ by the assumption on $k$) or of the
form $B_{ih} \delta$ (which cannot exceed $B_{ik} P_{kj}$ by the definition
of $\delta$). Thus, the maximum in $\eqref{eq_x}$ is $A_{ij}$ and we have
$A = BP'$.

The converse is immediate.
\end{proof}

\begin{proposition}\label{prop_rorderinherit2}
Let $A$ and $B$ be matrices in $\trop^{n \times n}$. Then $A \leq_\GreenR B$
in $\trop^{n \times n}$ if and only if $A \leq_\GreenR B$ in $\olt^{n \times n}$.
\end{proposition}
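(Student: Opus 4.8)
The plan is to dispatch the forward implication trivially and concentrate on the reverse. Since $\trop^{n \times n} \subseteq \olt^{n \times n}$ and the entries of $A$, $B$ and any witnessing matrix over $\trop$ avoid $\infty$, the product computed over $\trop$ agrees with the product computed over $\olt$; hence if $A = BP$ with $P \in \trop^{n \times n}$, then the very same equation exhibits $A \leq_\GreenR B$ in $\olt^{n \times n}$.

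For the reverse implication, suppose $A = BX$ for some $X \in \olt^{n \times n}$. The only obstruction to concluding $A \leq_\GreenR B$ over $\trop^{n \times n}$ is that $X$ may contain entries equal to $\infty$, which are not permitted in $\trop$. Mirroring the strategy of Proposition~\ref{prop_rorderinherit1} (where the offending $-\infty$ entries of the witness were replaced by a suitable finite value), I would replace every $\infty$ entry of $X$ by $-\infty$ (any finite value would serve equally well), obtaining a matrix $X' \in \trop^{n \times n}$, and then verify that $BX' = BX = A$.

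The heart of the argument is the observation that an entry $X_{kj} = \infty$ can only occur when the index $k$ labels an all-$(-\infty)$ column of $B$. Indeed, fix a pair $(k,j)$ with $X_{kj} = \infty$ and consider any $i$. If $B_{ik}$ were finite then $B_{ik} X_{kj} = \infty$, forcing $(BX)_{ij} = \infty$; but $(BX)_{ij} = A_{ij} \in \trop$ is not $\infty$, a contradiction. Hence $B_{ik} = -\infty$ for every $i$. Using the defining convention $(-\infty) \otimes \infty = -\infty$ of $\olt$, the term $B_{ik} X_{kj}$ equals $-\infty$, and after the substitution the term $B_{ik} X'_{kj}$ also equals $-\infty$, so the replacement alters no entry of the product. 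Comparing the two products entry by entry then yields $BX' = BX = A$, so $A \leq_\GreenR B$ in $\trop^{n \times n}$.

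I expect the only genuinely delicate point to be the careful bookkeeping around the exceptional rule $(-\infty) \otimes \infty = -\infty$: it is precisely this convention that renders the $\infty$ entries of $X$ harmless, and one must check that it, rather than a naive reading of $(-\infty) + \infty$, is being invoked at the crucial step. Everything else is routine verification.
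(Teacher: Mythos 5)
Your proposal is correct and follows essentially the same route as the paper's own proof: both directions match, including the key observation that an $\infty$ entry $X_{kj}$ forces column $k$ of $B$ to be identically $-\infty$ (since otherwise $(BX)_{ij} = \infty \neq A_{ij}$), so that replacing such entries by an arbitrary element of $\trop$ (the paper uses $0$, you use $-\infty$) leaves the product unchanged via the convention $(-\infty) \otimes \infty = -\infty$. No gaps.
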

\begin{proof}
Suppose $A \leq_\GreenR B$ in $\olt^{n \times n}$. Then there
is a matrix $P \in \olt^{n \times n}$ such that $A = BP$. Let $P' \in \trop^{n \times n}$ be
obtained from $P$ by replacing every $\infty$ entry with $0$ (or indeed
any other element of $\trop$). Now for $1 \leq i, j \leq n$ we have
\begin{equation}\label{eq_z}
A_{ij} = (BP)_{ij} = \bigoplus_{k=1}^n B_{ik} P_{kj}.
\end{equation}
Since no $A_{ij}$ is $\infty$, if any $k$ and $j$ are such that
$P_{kj} = \infty$ then we must have $B_{ik} = -\infty$ for all $i$.
It follows that $B_{ik} P_{kj} = -\infty = B_{ik} P'_{kj}$ for all $k$
and $j$, whence $A = BP'$ and $A \leq_\GreenR B$ in $\trop^{n \times n}$.

Again, the converse is immediate.
\end{proof}

\begin{lemma}\label{lemma_dinherit1}
Suppose $X \in \tropn$ is $\GreenR$-related to a matrix in $\ftn$,
and $\GreenL$-related to a matrix in $\ftn$. Then $X \in \ftn$.
\end{lemma}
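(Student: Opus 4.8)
The plan is to convert the two Green's-relation hypotheses into statements about the column space and row space of $X$, and then to argue that these spaces are too ``finite'' to permit $X$ to have any $-\infty$ entry. First I would invoke Proposition~\ref{prop_landr}: since $\trop$ is a commutative semiring with multiplicative identity and local zeros, the hypotheses say precisely that $C_\trop(X) = C_\trop(A)$ for some $A \in \ftn$ and $R_\trop(X) = R_\trop(B)$ for some $B \in \ftn$ (recalling that $\ftn \subseteq \tropn$, so that the $\GreenR$- and $\GreenL$-relations in question are relations in $\tropn$ and Proposition~\ref{prop_landr} applies).

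The key observation is that the $\trop$-linear span of finitely many \emph{finite} vectors contains no nonzero vector with a $-\infty$ component. Indeed, writing a typical element of such a span as $\bigoplus_i \lambda_i v_i$ with each $v_i$ finite and each $\lambda_i \in \trop$, its $j$th component is $\bigoplus_i \lambda_i (v_i)_j$; since every $(v_i)_j$ is finite, this equals $-\infty$ only when every $\lambda_i = -\infty$, in which case the vector is the zero (all-$-\infty$) vector. Applying this to $C_\trop(A)$ and to $R_\trop(B)$, and noting that every column of $X$ lies in $C_\trop(X) = C_\trop(A)$ while every row of $X$ lies in $R_\trop(X) = R_\trop(B)$, I conclude that each column of $X$ is either entirely finite or identically $-\infty$, and likewise each row.

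The final step is a propagation argument. Suppose for contradiction that some entry $X_{ij} = -\infty$. Then column $j$ of $X$ is not finite, so by the previous step it is identically $-\infty$; but then every row of $X$ meets column $j$ in a $-\infty$ entry, so every row of $X$ is identically $-\infty$, whence $X$ is the zero matrix. This contradicts $C_\trop(X) = C_\trop(A)$: because $A$ has finite entries and $n \geq 1$, its columns are finite, hence nonzero, vectors of $C_\trop(A)$, whereas $C_\trop(X)$ would consist of the zero vector alone. Therefore $X$ has no $-\infty$ entry, that is, $X \in \ftn$.

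I would flag that the one genuinely substantive point is the necessity of using \emph{both} hypotheses. The column-space condition alone does not force finiteness, since it permits $X$ to have some entirely finite columns alongside some identically-$-\infty$ columns (such an $X$ is still $\GreenR$-related to a finite matrix); it is the interaction of the row and column conditions, through the propagation step, that collapses any single $-\infty$ entry to the zero matrix and yields the contradiction. The underlying tropical subtlety to handle with care is simply the behaviour of scaling by $-\infty$, which is what makes the ``finite or zero'' dichotomy precise.
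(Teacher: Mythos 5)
Your proof is correct and follows essentially the same route as the paper's: both reduce via Proposition~\ref{prop_landr} to the observation that the $\trop$-linear span of finitely many finite vectors contains no nonzero vector with a $-\infty$ entry, and then propagate a single $-\infty$ entry of $X$ through its columns and rows to force $X$ to be the zero matrix. The only (immaterial) difference lies in the closing contradiction: the paper notes that the zero matrix forms an ideal and hence sits alone in its $\GreenR$-class, whereas you compare the column space of the zero matrix directly against the nonzero finite columns of $A$; both arguments are valid.
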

\begin{proof}
Suppose $X \GreenR Y \in \ftn$. Then any column of $X$ containing $-\infty$
lies in $C(X)$, which by Proposition~\ref{prop_landr} is $C(Y)$. But it is
easily seen that the only column vector in $C(Y)$ containing $-\infty$ is
the zero vector, so every column of $X$ containing $-\infty$ is a column
of $-\infty$s. A dual argument, using the fact that $X$ is $\GreenL$-related
to a matrix in $\ftn$, shows that every row of $X$ containing $-\infty$ is
a row of $-\infty$s. Now if $X \notin \ftn$ then $X$ contains some entry
equal to $-\infty$, from which we may deduce that every entry of $X$ is
$-\infty$, that is, that $X$ is the zero matrix. But the zero matrix forms
an ideal, and so must lie in a $\GreenR$-class by itself, contradicting
the fact that $X \GreenR Y$.
\end{proof}

\begin{lemma}\label{lemma_dinherit2}
Suppose $X \in \oltn$ is $\GreenR$-related to a matrix in $\tropn$,
and $\GreenL$-related to a matrix in $\tropn$. Then $X \in \tropn$.
\end{lemma}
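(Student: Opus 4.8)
The plan is to exploit the two one‑sided hypotheses asymmetrically: I would use the $\GreenL$‑hypothesis to build a ``left identity'' for $X$ that annihilates everything supported on the ``bad'' rows, and use the $\GreenR$‑hypothesis together with the finiteness of the witnessing matrix $Y$ to manufacture a \emph{nonzero} element of the column space that \emph{is} supported on those bad rows. Playing these against each other yields the contradiction.

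First I would restate the hypotheses concretely. Suppose $X \GreenR Y$ and $X \GreenL Z$ with $Y, Z \in \tropn$. By Proposition~\ref{prop_landr} we have $C(X) = C(Y)$, and the definition of $\GreenL$ supplies matrices $U, V \in \oltn$ with $X = UZ$ and $Z = VX$, so that $X = (UV)X$. Writing $E = UV$ we have $EX = X$, and since matrix multiplication is $\olt$‑linear, $E$ fixes every element of $C(X)$: for $v = \bigoplus_c \mu_c X_{\cdot c} \in C(X)$ we get $Ev = \bigoplus_c \mu_c (EX)_{\cdot c} = \bigoplus_c \mu_c X_{\cdot c} = v$. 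The goal is to rule out $\infty$ entries, so I would suppose for contradiction that $X_{ab} = \infty$, and let $L$ denote the set of rows of $X$ that contain an $\infty$.

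The annihilation step uses only the $\GreenL$ side. For each $m \in L$, row $m$ of $X$ has some entry $X_{mc} = \infty$; since $Z = VX$ lies in $\tropn$ and so has no $\infty$ entry, the identity $Z_{ic} = \bigoplus_k (V_{ik} \otimes X_{kc})$ forces $V_{im} = -\infty$ for every $i$, i.e. column $m$ of $V$ is identically $-\infty$. Consequently $V$, and hence $E = UV$, sends any vector whose support is contained in $L$ to the zero vector $\mathbf{0}$.

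The construction of a nonzero witness in $C(X)$ supported on $L$ is where the $\GreenR$ side and the finiteness of $Y$ enter, and this is the main obstacle: the naive guess $\infty\, X_{\cdot b}$ need not be supported on $L$. The key idea is to expand along the columns of the finite matrix $Y$ rather than those of $X$. The column $X_{\cdot b}$ lies in $C(Y)$ and has $(X_{\cdot b})_a = \infty$; writing $X_{\cdot b} = \bigoplus_i \br{B_i}{X_{\cdot b}} B_i$ over the columns $B_1, \dots, B_q$ of $Y$ (Proposition~\ref{prop_coeffs}) and noting $(B_i)_a \neq \infty$ because $Y \in \tropn$, some coefficient $\br{B_{i_0}}{X_{\cdot b}}$ must equal $\infty$. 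By definition of the bracket this gives $\infty B_{i_0} \leq X_{\cdot b}$, so $u := \infty B_{i_0}$ takes only the values $-\infty$ and $\infty$, is nonzero (its $a$th entry is $\infty$), and satisfies $u \leq X_{\cdot b}$. The inequality pins down the support: wherever $u$ equals $\infty$ we must have $X_{\cdot b} = \infty$ as well, so the $\infty$‑support of $u$ lies in $L$. Now $u \in C(X)$ is supported on $L$, whence $Eu = U(Vu) = \mathbf{0}$; but $E$ fixes $C(X)$, so $Eu = u \neq \mathbf{0}$, a contradiction. Hence $X$ has no $\infty$ entry and $X \in \tropn$.
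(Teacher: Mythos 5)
Your proof is correct, and it diverges from the paper's in an interesting way. The $\GreenR$ half is essentially parallel: the paper first shows $C(X)$ is generated by the $\infty$-free columns of $X$ (via $C(X)=C(Y)$), then observes that expressing the offending column over an $\infty$-free generating set forces some generator to carry coefficient $\infty$ while having a finite entry in the offending row; your extraction over the columns of $Y$ via Proposition~\ref{prop_coeffs} is the same manoeuvre, and your inequality $\infty B_{i_0} \leq X_{\cdot b}$ is precisely the paper's $\infty X_k \leq X_j$. Where you genuinely part company is the $\GreenL$ half. The paper never touches multiplier matrices: it dualises the generation claim to get that $R(X)$ is generated by the $\infty$-free rows of $X$, notes that the inequality forces every such row to have $-\infty$ in the relevant column, and concludes that every vector of $R(X)$ --- in particular the offending row of $X$ itself --- has $-\infty$ there, a contradiction. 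You instead take explicit semigroup witnesses $X = UZ$, $Z = VX$ (harmless, since the identity matrix lies in $\oltn$, and if a witness were the adjoined identity then $X = Z \in \tropn$ outright), form $E = UV$ with $EX = X$, and play two facts against each other: $E$ fixes $C(X)$ pointwise by linearity, while $V$ --- whose columns indexed by $L$ are forced to be identically $-\infty$ because $Z = VX \in \tropn$ --- annihilates every vector supported on $L$, such as your witness $u = \infty B_{i_0}$. The paper's route is self-dual and stays entirely inside the row/column-space formalism of Proposition~\ref{prop_landr}, which fits the toolkit used elsewhere in the paper; yours is asymmetric but avoids proving the dual generation claim for $R(X)$ and isolates a reusable operator-theoretic fact (any $E$ with $EX = X$ fixes $C(X)$ pointwise, and any matrix whose ``bad'' columns vanish kills the corresponding supports). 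Both arguments lean on the convention $\infty \otimes (-\infty) = -\infty$ at the same critical points, so neither is more robust in that respect.
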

\begin{proof}
We claim first that the column space $C(X)$ of $X$ is generated by those
columns which do not contain $\infty$. Indeed, suppose $X \GreenR Y \in \tropn$.
Then by Proposition~\ref{prop_landr}, $C(X) = C(Y)$. In particular, each
column of $Y$ is a linear combination of columns of $X$. Clearly this
combination cannot a column of $X$ with an $\infty$ entry with a coefficient
other than $-\infty$, or else the column of $Y$ with contain $\infty$. Thus,
each column of $Y$ is a linear combination of those columns of $X$ which do
not contain $\infty$. But every vector in $C(X) = C(Y)$ is a linear
combination of the columns of $Y$, and hence of the columns of $X$ which
do not contain $\infty$, as required.

By a dual argument, the row space of $X$ is generated by those rows
which do not contain $\infty$.

Now suppose for a contradiction that $X \notin \tropn$, and choose some
row $i$ and column $j$ with $X_{ij} = \infty$. By the above, the $j$th
column (call it $X_j$)
can be written as a linear combination of those columns not containing
$\infty$. Clearly, one of the columns in this combination (say column $X_k$)
must have coefficient $\infty$ and a finite entry in position $i$.
Now $\infty X_k \leq X_j$, so for any $p$ such that $X_{pj} \neq \infty$ we
must have $X_{pk} = -\infty$. In particular, in any row of $X$ not containing
$\infty$, column $k$ will contain $-\infty$. Since the rows not containing
$\infty$ span the row space $R(X)$, it follows that every row vector in $R(X)$
contains $-\infty$ in column $k$. But since the rows of $X$ lie in $R(X)$,
this contradicts the fact that row $i$ of $X$ contains a finite entry in
column $k$.
\end{proof}

The preceding propositions and lemmas combine to show that many of Green's
relations in $\ftn$ and $\tropn$ are inherited from the containing semigroup
$\oltn$.

\begin{theorem}[Inheritance of Green's Relations]\label{thm_inherit}
Each of Green's pre-orders $\leq_\GreenR$, $\leq_\GreenL$ and
equivalence relations $\GreenL$, $\GreenR$, $\GreenH$, $\GreenD$
in $\ft^{n \times n}$ or $\trop^{n \times n}$ is the restriction of
the corresponding relation in $\oltn$.
\end{theorem}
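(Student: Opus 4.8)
The plan is to handle the relations in order of increasing difficulty, leaning on the preceding propositions and lemmas, and reserving $\GreenD$ for last as the only genuinely non-routine case. Throughout, the "downward" implication (a relation holding in the smaller semigroup holds in $\oltn$) is immediate: any witnessing matrix $P$ either equals the adjoined identity, in which case the two matrices coincide, or lies in the smaller semiring, in which case the defining equation (such as $BP=A$) persists verbatim in $\oltn$. So in every case the real content is the "upward" implication.

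First I would dispose of the pre-orders. Proposition~\ref{prop_rorderinherit1} and Proposition~\ref{prop_rorderinherit2} together show that $\leq_\GreenR$ is reflected through the chain $\ftn \subseteq \tropn \subseteq \oltn$; composing them yields the $\ftn$-versus-$\oltn$ statement as well. The statements for $\leq_\GreenL$ then follow by transpose duality, with no new argument: the transpose map is an anti-isomorphism of the matrix semigroup (over each of the three semirings, since transposing preserves the entry set), so $A \leq_\GreenL B$ holds exactly when $A^T \leq_\GreenR B^T$, and one applies the already-established $\leq_\GreenR$ inheritance to $A^T$ and $B^T$.

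Second, I would deduce $\GreenR$, $\GreenL$ and $\GreenH$ formally. Since $A \GreenR B$ holds precisely when $A \leq_\GreenR B$ and $B \leq_\GreenR A$, inheritance of $\GreenR$ is immediate from that of $\leq_\GreenR$, and dually for $\GreenL$; then $\GreenH = \GreenL \cap \GreenR$ gives the $\GreenH$ statement at once.

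The heart of the argument, and the step I expect to be the main obstacle, is $\GreenD$. Suppose $A, B$ lie in the smaller semiring ($\ftn$ or $\tropn$) and $A \GreenD B$ in $\oltn$. By the standard characterization of $\GreenD$ recalled above, there is a connecting matrix $c \in \oltn$ with $A \GreenR c$ and $c \GreenL B$ in $\oltn$. The difficulty is that $c$ is a priori only known to lie in $\oltn$, whereas to invoke the inheritance of $\GreenR$ and $\GreenL$ just established I need $c$ in the smaller semiring; this is exactly what Lemmas~\ref{lemma_dinherit1} and~\ref{lemma_dinherit2} supply. In the $\tropn$-versus-$\oltn$ case, $c$ is $\GreenR$-related to $A \in \tropn$ and $\GreenL$-related to $B \in \tropn$, so Lemma~\ref{lemma_dinherit2} forces $c \in \tropn$. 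In the $\ftn$-versus-$\oltn$ case I would descend in two steps: using $A, B \in \ftn \subseteq \tropn$, Lemma~\ref{lemma_dinherit2} first places $c$ in $\tropn$, after which Lemma~\ref{lemma_dinherit1} places $c$ in $\ftn$. Once $c$ is known to lie in the smaller semiring, the inheritance of $\GreenR$ and $\GreenL$ yields $A \GreenR c$ and $c \GreenL B$ within that semiring, whence $A \GreenD B$ there. This completes every case.
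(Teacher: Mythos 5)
Your proof is correct and takes essentially the same approach as the paper: the pre-orders and $\GreenR$, $\GreenL$, $\GreenH$ from Propositions~\ref{prop_rorderinherit1} and \ref{prop_rorderinherit2} and their duals, and then $\GreenD$ by forcing the connecting matrix into the smaller semiring via Lemmas~\ref{lemma_dinherit1} and \ref{lemma_dinherit2}. Your write-up merely makes explicit (correctly) the details the paper leaves implicit, such as the two-step descent $\oltn \to \tropn \to \ftn$ for the $\ftn$ case of $\GreenD$.
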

\begin{proof}
The results for $\GreenR$ and $\GreenL$ follow immediately from
Propositions~\ref{prop_rorderinherit1} and \ref{prop_rorderinherit2} and
their duals. The claim for $\GreenH$ is immediate from the claims for
$\GreenR$ and $\GreenL$. The claim for $\GreenD$ is a consequence of the
claims for $\GreenR$ and $\GreenL$ together with 
Lemma~\ref{lemma_dinherit1} and Lemma~\ref{lemma_dinherit2}.
\end{proof}

We also have the following immedate corollary of Lemmas~\ref{lemma_dinherit1}
and \ref{lemma_dinherit2}.

\begin{corollary}
$\ftn$ is a union of $\GreenH$-classes in $\tropn$ and in $\oltn$, while
$\tropn$ is a union of $\GreenH$-classes in $\oltn$.
\end{corollary}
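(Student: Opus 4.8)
The plan is to unwind the phrase ``union of $\GreenH$-classes'' and then feed the resulting data directly into the two lemmas. Recall that a subset $T$ of a semigroup is a union of $\GreenH$-classes precisely when it is closed under $\GreenH$-equivalence, that is, whenever $X \in T$ and $Y \GreenH X$ we must have $Y \in T$. Since $\GreenH = \GreenL \cap \GreenR$, the relation $Y \GreenH X$ says exactly that $Y$ is simultaneously $\GreenR$-related and $\GreenL$-related to $X$; when $X$ lies in $\ftn$ (respectively $\tropn$) this is precisely the hypothesis demanded by Lemma~\ref{lemma_dinherit1} (respectively Lemma~\ref{lemma_dinherit2}).

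The two ``one-step'' assertions will follow immediately. For the claim that $\tropn$ is a union of $\GreenH$-classes in $\oltn$, I take $X \in \tropn$ and $Y \in \oltn$ with $Y \GreenH X$ in $\oltn$; then $Y$ is $\GreenR$- and $\GreenL$-related (in $\oltn$) to $X \in \tropn$, so Lemma~\ref{lemma_dinherit2} gives $Y \in \tropn$. Likewise, for the claim that $\ftn$ is a union of $\GreenH$-classes in $\tropn$, I take $X \in \ftn$ and $Y \in \tropn$ with $Y \GreenH X$ in $\tropn$, and Lemma~\ref{lemma_dinherit1} delivers $Y \in \ftn$.

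The only assertion requiring more than a single lemma is that $\ftn$ is a union of $\GreenH$-classes in $\oltn$, and this is where I expect the minor obstacle to lie: one must chain the two lemmas while keeping track of the ambient semigroup in which each Green's relation is taken. Starting from $X \in \ftn \subseteq \tropn$ and $Y \in \oltn$ with $Y \GreenH X$ in $\oltn$, Lemma~\ref{lemma_dinherit2} first yields $Y \in \tropn$. It then remains to pass from the $\GreenH$-relation in $\oltn$ to the corresponding one in $\tropn$; since both $X$ and $Y$ now lie in $\tropn$, this is exactly the inheritance guaranteed by Theorem~\ref{thm_inherit} (equivalently by Proposition~\ref{prop_rorderinherit2} and its dual). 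Having $Y \GreenH X$ in $\tropn$, a second application --- this time of Lemma~\ref{lemma_dinherit1} --- gives $Y \in \ftn$, completing the proof. The substance is entirely contained in the two lemmas; the work here is purely the bookkeeping of definitions and the verification that the relevant Green's relations agree across the three semirings.
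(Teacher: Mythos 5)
Your proof is correct and follows exactly the route the paper intends: the paper states this as an immediate corollary of Lemmas~\ref{lemma_dinherit1} and \ref{lemma_dinherit2} with no written proof, and your argument is precisely the natural unwinding of that claim. You also correctly identify the one point that is not purely formal --- chaining the two lemmas for the $\ftn$-in-$\oltn$ case requires transferring the $\GreenH$-relation from $\oltn$ down to $\tropn$ via Theorem~\ref{thm_inherit} (or Proposition~\ref{prop_rorderinherit2} and its dual), which is available since the corollary follows that theorem.
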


\section{Converse Duality}\label{sec_converseduality}

In this section we shall establish a converse to Theorem~\ref{thm_duality}
in the finitary case,
showing that an anti-isomorphism between two convex sets $X$ and $Y$ in
$\ft^n$ is a
sufficient, as well as a necessary, condition for the existence of a matrix
with row space $X$ and column space $Y$. As well as being of interest in
its own right, this together with Theorem~\ref{thm_inherit} will allow us to completely describe Green's $\GreenD$
relation in finite dimensional full matrix semigroups over the tropical
semirings $\ft$, $\trop$ and $\olt$. We begin with some lemmas.

\begin{lemma}\label{lemma_kernel}
Let $S = \ft$ or $S = \olt$. 
Suppose $B \in S^{m \times n}$ is a tropical matrix and
$z \in S^{1 \times n}$ is a 
row vector not in $R_S(B)$. Then there exist column vectors
$x, y \in S^{n \times 1}$
such that $Bx=By$ but $zx \neq zy$.
\end{lemma}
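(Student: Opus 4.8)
The plan is to construct the required columns $x, y$ explicitly from $z$ together with the projection of $z$ onto the row space. Writing $B_1, \dots, B_m \in S^{1 \times n}$ for the rows of $B$, I would set
$$\hat z = \bigoplus_{i=1}^m \br{B_i}{z}\, B_i \in R_S(B).$$
By the definition of the bracket we have $\br{B_i}{z}\, B_i \leq z$ for each $i$, so $\hat z \leq z$; and since $\hat z$ lies in $R_S(B)$ while $z$ does not, we have $\hat z \neq z$. Because $\hat z \leq z$, there must then be a coordinate $k$ with $\hat z_k < z_k$. This $k$ is the ``defect'' coordinate that $z$ sees but that no linear combination of the rows of $B$ can reproduce, and it is what the construction will exploit.

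Next I would take the columns $x = (-z)^T$ and $y = (-\hat z)^T$ in $S^{n \times 1}$ (both finite when $S = \ft$, since then $\hat z$ is finite). Using the definition of matrix multiplication together with Proposition~\ref{prop_bracketequiv}, the $i$th component of $Bx$ is $\bigoplus_{j=1}^n (B_{ij} + (-z_j)) = -\br{B_i}{z}$, and likewise the $i$th component of $By$ is $-\br{B_i}{\hat z}$. The crux of the argument is that these agree, that is, that $\br{B_i}{\hat z} = \br{B_i}{z}$ for every $i$: indeed the chain $\br{B_i}{z}\, B_i \leq \hat z \leq z$ forces $\br{B_i}{z} \leq \br{B_i}{\hat z} \leq \br{B_i}{z}$. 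Hence $Bx = By$. This step is the heart of the proof: replacing $z$ by its projection $\hat z$ is exactly what guarantees that $z$ and $\hat z$ have the same bracket against each row of $B$, so that $B$ cannot distinguish $x$ from $y$, even though $z$ itself will.

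Finally I would compare the two scalar products. We have $zx = \bigoplus_{j=1}^n (z_j - z_j)$ and $zy = \bigoplus_{j=1}^n (z_j - \hat z_j)$. When $S = \ft$ every $z_j$ is finite, so $zx = 0$, whereas the $k$th summand of $zy$ is $z_k - \hat z_k > 0$, giving $zy \geq z_k - \hat z_k > 0 = zx$ and hence $zx \neq zy$, as required. For $S = \olt$ the same conclusion holds after a short case analysis of the coordinates where $z_j \in \{\pm\infty\}$, using the convention $\infty + (-\infty) = -\infty$: such coordinates contribute $-\infty$ to $zx$, while the defect coordinate $k$ (at which necessarily $z_k > -\infty$) contributes $z_k - \hat z_k > 0$ to $zy$. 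I expect the only genuine subtlety to be the verification that $Bx = By$ --- more precisely, recognising that the projection $\hat z$ is the correct second vector, since it is characterised by having the same row-brackets as $z$ while differing from $z$ in the lattice order; once that is in place, the remaining computations are routine.
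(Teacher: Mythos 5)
Your proof is correct, and in fact you have constructed exactly the same pair of vectors as the paper: the paper sets $x = (-z)^T$ and $y = (-v)^T$, where $v \in R_S(B)$ is the unique vector with $\theta_B(v) = Bx$, and unwinding the definition of $\theta_B'$ shows that this $v$ is precisely your projection $\hat z = \bigoplus_i \br{B_i}{z}\, B_i$. The difference is in how the two required properties are verified, and there your route genuinely diverges from the paper's. For $Bx = By$, the paper simply cites Proposition~\ref{prop_dualitybijection} (the duality maps are mutually inverse bijections between $R(B)$ and $C(B)$), whereas you re-derive the needed instance via the bracket sandwich $\br{B_i}{z} \leq \br{B_i}{\hat z} \leq \br{B_i}{z}$. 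For $zx \neq zy$, the paper augments $B$ with the row $z$ to form $C = \begin{pmatrix} z \\ B \end{pmatrix}$ and invokes injectivity of $\theta_C$ on $R_S(C)$ (Proposition~\ref{prop_dualitybijection} again, which ultimately rests on Proposition~\ref{prop_coeffs}), while you argue directly from a defect coordinate $k$ with $\hat z_k < z_k$ that $zx \leq 0 < zy$; your $\olt$ case analysis is sound, since coordinates with $z_j = \pm\infty$ contribute $-\infty$ to $zx$ under the convention $\infty \otimes (-\infty) = -\infty$, finite coordinates contribute $0$, and the $k$th term of $zy$ is strictly positive in every case (it equals $\infty$ whenever $z_k = \infty$ or $\hat z_k = -\infty$). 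What each approach buys: the paper's is shorter given the duality machinery already developed; yours is self-contained, using only Proposition~\ref{prop_bracketequiv} and the definition of the bracket, never the duality bijection itself. That is worth noting in light of the remark in Section~\ref{sec_remarks} that analogues of this lemma may hold in other semirings ``for different reasons'': your argument makes explicit that only residuation-style computations are needed.
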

\begin{proof}
Set $x = (-z)^T$, and consider the vector $Bx$, which clearly
lies in the column space of $B$. By Proposition~\ref{prop_dualitybijection}
the map $\theta_B$ is a bijection from the $R_S(B)$ to $C_S(B)$, so there
is a $v \in R_S(B)$ such that
$\theta_B(v) = Bx$. Note that $v \neq z$ since $z$ does not lie in
$R_S(B)$.
If we set $y = (-v)^T$ then by the definition of $\theta_B$ we have
$By = B(-v)^T = \theta_B(v) = Bx$, so it will suffice to show that
$zx \neq zy$. 

To this end, consider the matrix
$$C = \begin{pmatrix} z \\ B \end{pmatrix} \in \olt^{(m+1) \times n}.$$
Then $z$ (which is a row of $C$) and $v$ (which
was chosen to lie in $R_S(B)$) both lie in $R_S(C)$. Consider now the duality
map $\theta_C$. By Proposition~\ref{prop_dualitybijection} again, $\theta_C$
is injective on $R_S(C)$, so we have
$$Cx = C(-z)^T = \theta_C(z) \neq \theta_C(v) = C(-v)^T = Cy.$$
But
$$Cx = \begin{pmatrix} z \\ B \end{pmatrix} x = \begin{pmatrix} zx \\ Bx \end{pmatrix} \text{ and } Cy = \begin{pmatrix} z \\ B \end{pmatrix} y = \begin{pmatrix} zy \\ By \end{pmatrix} $$
and we know that $Bx = By$, so for $Cx \neq Cy$ we must have $zx \neq zy$.
\end{proof}

\begin{theorem}\label{thm_lorderkernel}
Let $S = \ft$ or $S = \olt$, and let $A, B \in S^{m \times n}$. Then
the following are equivalent:
\begin{itemize}
\item[(i)] $R_S(A) \subseteq R_S(B)$.
\item[(ii)] there is a linear morphism from $C_S(B)$ to $C_S(A)$ taking
the $i$th column of $B$ to the $i$th column of $A$ for all $i$.
\item[(iii)] there is a surjective linear morphism from $C_S(B)$ to $C_S(A)$ taking
the $i$th column of $B$ to the $i$th column of $A$ for all $i$.
\end{itemize}
\end{theorem}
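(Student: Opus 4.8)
The plan is to prove the cyclic chain of implications (i) $\Rightarrow$ (iii) $\Rightarrow$ (ii) $\Rightarrow$ (i). The implication (iii) $\Rightarrow$ (ii) is immediate, since a surjective linear morphism sending the $i$th column of $B$ to the $i$th column of $A$ is in particular such a morphism; so the real content lies in the two remaining steps, namely manufacturing the (surjective) morphism from the row-space containment, and recovering the containment from the existence of the morphism.

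For (i) $\Rightarrow$ (iii) I would first record that $R_S(A) \subseteq R_S(B)$ holds if and only if $A = MB$ for some matrix $M \in S^{m \times m}$; this is exactly the (dual of the) argument proving Proposition~\ref{prop_landr}, the finitary case using the local-zeros property to rewrite a combination of a subset of the rows of $B$ as a combination of all of them. Given such an $M$, I would define $\psi : S^{m \times 1} \to S^{m \times 1}$ by $\psi(c) = Mc$. Left multiplication by a fixed matrix distributes over $\oplus$ and commutes with scaling (this is precisely the distributivity and associativity underlying matrix multiplication over $\olt$), so $\psi$ is a linear morphism of semimodules. Since every element of $C_S(B)$ has the form $B\lambda$ for some $\lambda \in S^{n \times 1}$, and $\psi(B\lambda) = MB\lambda = A\lambda \in C_S(A)$, the restriction of $\psi$ maps $C_S(B)$ onto all of $C_S(A)$, hence is surjective; moreover $\psi(B_i) = MB_i = (MB)_i = A_i$, so $\psi$ carries each column of $B$ to the corresponding column of $A$, as required.

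The heart of the proof is (ii) $\Rightarrow$ (i), where Lemma~\ref{lemma_kernel} does the work. Suppose $\phi : C_S(B) \to C_S(A)$ is a linear morphism with $\phi(B_i) = A_i$ for all $i$. Writing an arbitrary column combination as $Bx = \bigoplus_i x_i B_i$ and using linearity (over $\olt$ this uses that a morphism of semimodules preserves scaling by $\infty$ and $-\infty$ as well as by finite scalars), I would deduce $\phi(Bx) = \bigoplus_i x_i \phi(B_i) = \bigoplus_i x_i A_i = Ax$ for every $x \in S^{n \times 1}$. Consequently, whenever $Bx = By$ we obtain $Ax = \phi(Bx) = \phi(By) = Ay$, and reading off the $k$th coordinate gives $A_k x = A_k y$ for each row $A_k$ of $A$. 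This says precisely that no pair $x, y$ separates $A_k$ from $R_S(B)$ in the sense of Lemma~\ref{lemma_kernel}, so the contrapositive of that lemma (applied with $z = A_k$) forces $A_k \in R_S(B)$. As this holds for every row of $A$, and $R_S(A)$ is generated by those rows, we conclude $R_S(A) \subseteq R_S(B)$.

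The main obstacle is this last implication: the key move is to translate the purely algebraic datum ``there is a column-fixing morphism'' into the functional-separation condition ``$Bx = By$ implies $A_k x = A_k y$'', after which Lemma~\ref{lemma_kernel} applies essentially verbatim. The forward direction is routine once one observes that the required morphism may be taken to be left multiplication by a single matrix $M$ with $A = MB$. Throughout, the only point demanding care is the distinction between $S = \ft$ and $S = \olt$: the finitary case needs local zeros to produce $M$, while the $\olt$ case needs the morphism to respect scaling by $\pm\infty$, and both facts are already supplied by the preliminaries and by the definition of a semimodule morphism.
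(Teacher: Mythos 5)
Your proof is correct, and on the pivotal implication it coincides with the paper's: both proofs of (ii) $\Rightarrow$ (i) hinge on Lemma~\ref{lemma_kernel}, your only variation being that you apply the lemma row by row (taking $z = A_k$ and arguing by contrapositive) where the paper applies it to an arbitrary $z = z'A \in R_S(A) \setminus R_S(B)$ and derives the same contradiction; your row-by-row version even sidesteps the small point that, over $\ft$, writing a general element of $R_S(A)$ as $z'A$ requires a padding argument. Where you genuinely depart from the paper is in (i) $\Rightarrow$ (iii). The paper constructs no matrix at all: it observes that $R_S(A) \subseteq R_S(B)$ forces every linear relation $Bx = By$ among the columns of $B$ to hold among the columns of $A$ (since $bx = by$ propagates by distributivity from the rows of $B$ to all of $R_S(B)$, hence to the rows of $A$), so the assignment of the $i$th column of $B$ to the $i$th column of $A$ extends to a well-defined linear morphism $C_S(B) \to C_S(A)$, and surjectivity is automatic because the image contains the columns of $A$ and hence all of $C_S(A)$. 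You instead factor $A = MB$ via the local-zeros argument (the dual of Proposition~\ref{prop_landr}, whose proof does transfer verbatim to rectangular matrices) and take the morphism to be left multiplication by $M$. Your construction makes well-definedness, linearity and surjectivity immediate and concrete; its cost is an extra dependence on the multiplicative identity and local zeros. The paper's relation-preservation route uses nothing beyond Lemma~\ref{lemma_kernel} and generic semiring arithmetic, which is exactly what lets the authors claim in Section~\ref{sec_remarks} that the equivalence of (i), (ii), (iii) persists over any semiring satisfying an analogue of Lemma~\ref{lemma_kernel}; under such a generalisation your proof of (i) $\Rightarrow$ (iii) would need local zeros as an additional hypothesis. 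One tiny gloss worth repairing: over $\ft$, your claim that every element of $C_S(B)$ has the form $B\lambda$ itself needs the same local-zeros padding, though this is avoidable by computing the image of $C_S(B)$ on generators, since $\psi$ is linear and sends the columns of $B$ to the columns of $A$.
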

\begin{proof}
First note that (iii) implies (ii) trivially, while if (ii) holds then the
given morphism has image including the columns of $A$, and hence
contains $C_S(A)$, and thus is surjective, so (iii) holds.

Now let $c_1, \dots, c_n$ denote the columns of $A$ and $d_1, \dots, d_n$ denote
the columns of $B$.

Suppose for a contradiction that (ii) holds and (i) does not.
Then we may choose $z \in R_S(A)$ (say $z = z'A$) such that
$z \notin R_S(B)$. Now by Lemma~\ref{lemma_kernel}, there are vectors $x$ and $y$ such that
$Bx = By$
but $zx \neq zy$. It follows from the latter that $Ax \neq Ay$, since
otherwise we would have $zx = z'Ax = z'Ay = zy$.
Now by the definition of matrix multiplication we have
$$\bigoplus_{i=1}^n{x_i c_i} = A x \neq A y = \bigoplus_{i=1}^n{y_i c_i}$$
while
$$\bigoplus_{i=1}^n{x_i d_i} = B x = B y = \bigoplus_{i=1}^n{y_i d_i},$$
which clearly contradicts the assumption that the map taking $d_i$ to
$c_i$ is a morphism of semimodules.

Conversely, suppose (i) holds. To show that (ii) holds it clearly suffices
to show that every linear relation between the columns of $B$ also holds
between the columns of $A$. Indeed, suppose
$$\bigoplus_{i=1}^n{x_i c_i} = \bigoplus_{i=1}^n{y_i c_i}$$
is a relation which holds between the columns $c_i$ of $A$.
Then letting $x$ and $y$ be the column vectors formed from the $x_i$s,
by the definition of matrix multiplication we have $Bx = By$. It follows
that $bx = by$ for every row $b$ of $B$, and hence by distributivity
for every vector in $R_S(B)$.
In particular, $bx = by$ for every vector in $R_S(A) \subseteq R_S(B)$, so
that $Ax = Ay$ and
$$\bigoplus{x_i d_i} = \bigoplus{y_i d_i}$$
as required.
\end{proof}

\begin{corollary}\label{cor_lorderkernel}
Let $S = \olt$ or $S = \ft$, and let $A, B \in S^{m \times n}$.
Then $R_S(A) = R_S(B)$ if and only if there is a linear isomorphism
from $C_S(A)$ to $C_S(B)$ taking the $i$th column of $B$ to the $i$th
column of $A$ for all $i$.
\end{corollary}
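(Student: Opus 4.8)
The plan is to derive this as a direct corollary of Theorem~\ref{thm_lorderkernel}, which already characterises the one-sided containment $R_S(A) \subseteq R_S(B)$ in terms of a column-space morphism respecting the column-indexing. The statement $R_S(A) = R_S(B)$ is simply the conjunction $R_S(A) \subseteq R_S(B)$ and $R_S(B) \subseteq R_S(A)$, so the natural strategy is to apply the theorem in both directions and then splice the two resulting morphisms together into a single isomorphism.

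More concretely, I would argue as follows. Suppose first that $R_S(A) = R_S(B)$. Applying Theorem~\ref{thm_lorderkernel} (the implication (i)$\Rightarrow$(iii)) to the containment $R_S(A) \subseteq R_S(B)$ yields a surjective linear morphism $\phi : C_S(B) \to C_S(A)$ sending the $i$th column of $B$ to the $i$th column of $A$. Applying the theorem again, this time to $R_S(B) \subseteq R_S(A)$, yields a surjective linear morphism $\psi : C_S(A) \to C_S(B)$ sending the $i$th column of $A$ to the $i$th column of $B$. The key observation is that the composite $\psi \circ \phi : C_S(B) \to C_S(B)$ fixes each column of $B$; since the columns generate $C_S(B)$ and the composite is linear, it must be the identity on all of $C_S(B)$. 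Symmetrically $\phi \circ \psi$ is the identity on $C_S(A)$. Hence $\phi$ and $\psi$ are mutually inverse, so $\phi$ is the required linear isomorphism taking the $i$th column of $B$ to the $i$th column of $A$.

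For the converse, suppose there is a linear isomorphism from $C_S(A)$ to $C_S(B)$ taking the $i$th column of $B$ to the $i$th column of $A$. Here I must be slightly careful about the direction stated: an isomorphism between the two column spaces is in particular a morphism, so condition (ii) of Theorem~\ref{thm_lorderkernel} is satisfied, giving $R_S(A) \subseteq R_S(B)$; its inverse is also a linear morphism and realises condition (ii) in the opposite direction, giving $R_S(B) \subseteq R_S(A)$. Combining the two containments gives $R_S(A) = R_S(B)$.

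The only genuine obstacle is bookkeeping rather than mathematical depth: one must track precisely which morphism sends which columns to which, and verify that the composite of the two morphisms genuinely fixes a generating set so that it is forced to be the identity. This reduction uses only that the columns generate the column space and that linear morphisms are determined by their values on generators, both of which are immediate from the definition of a convex set as a span. I expect no case analysis between $S = \ft$ and $S = \olt$ to be needed, since Theorem~\ref{thm_lorderkernel} has already absorbed that distinction.
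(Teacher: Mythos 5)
Your proof is correct and takes essentially the same route as the paper's: both directions apply Theorem~\ref{thm_lorderkernel} twice, with the forward direction observing that the two resulting surjective morphisms fix the columns (a generating set) under composition and hence are mutually inverse isomorphisms, and the converse applying the theorem to the isomorphism and to its inverse. Your explicit note about the direction mismatch in the corollary's statement (the map is stated from $C_S(A)$ to $C_S(B)$ yet sends columns of $B$ to columns of $A$) resolves it the same way the paper's proof does implicitly.
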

\begin{proof}
If $R_S(A) = R_S(B)$ then $R_S(A) \subseteq R_S(B)$ and $R_S(B) \subseteq R_S(A)$, so
by applying Theorem~\ref{thm_lorderkernel} twice 
there is a surjective
morphism from $C_S(B)$ to $C_S(A)$ taking the columns of $B$ to the respective
columsn of $A$, and a surjective morphism from $C_S(A)$ to $C_S(B)$ taking the
columns of $A$ to the respective columns of $C_S(B)$. Since these maps are
mutually inverse on the columns, which are generating sets for the respective
matrices, it is immediate that they are mutually inverse maps from $C_S(A)$
to $C_S(B)$, and hence must be isomorphisms.

Conversely, if $f : C_S(A) \to C_S(B)$ is an isomorphism taking the columns of $A$
to the respective columns of $B$, then its inverse is a morphism taking
the columns of $B$ to the respective columns of $A$. Applying
Theorem~\ref{thm_lorderkernel} to each of these functions we obtain
$R_S(A) \subseteq R_S(B)$ and $R_S(B) \subseteq R_S(A)$.
\end{proof}

The above results allow us to establish our promised
converse to the duality theorem (Theorem~\ref{thm_duality} above).

\begin{theorem}[Exact Duality Theorem]\label{thm_dgeometry}
Let $S = \ft$ or $S = \olt$. Suppose $X$ be an $m$-generated convex subset
of $S^n$ and $Y$ is an $n$-generated convex subset of $S^m$.
Then $X$ and $Y$ are anti-isomorphic if and only if there is a matrix
$M \in S^{m \times n}$ with $R_S(M) = X$ and $C_S(M) = Y$.
\end{theorem}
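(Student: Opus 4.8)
The plan is to prove the two implications separately, with the forward (``only if'') direction being essentially a restatement of the Algebraic Duality Theorem and the converse (``if'') direction being where the machinery of Section~\ref{sec_converseduality} is brought to bear.

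For the forward direction, suppose $M \in S^{m \times n}$ satisfies $R_S(M) = X$ and $C_S(M) = Y$. Then Theorem~\ref{thm_duality} tells us immediately that the duality map $\theta_M$ is an anti-isomorphism from $R_S(M) = X$ onto $C_S(M) = Y$ (using the $\olt$-version when $S = \olt$ and the finitary restriction when $S = \ft$), so $X$ and $Y$ are anti-isomorphic. There is essentially nothing to do here beyond unwinding definitions.

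The substance is the converse. Suppose we are given an anti-isomorphism $\theta : X \to Y$. Since $X$ is $m$-generated, I would first fix generators $r_1, \dots, r_m \in S^n$ of $X$ and assemble them as the rows of a matrix $N \in S^{m \times n}$, so that $R_S(N) = X$ by construction. Writing $B_1, \dots, B_n \in S^m$ for the columns of $N$ (which generate $C_S(N)$), Theorem~\ref{thm_duality} provides an anti-isomorphism $\theta_N : X \to C_S(N)$, whose inverse $\theta_N^{-1} : C_S(N) \to X$ is again an anti-isomorphism. Composing, Lemma~\ref{lemma_composeantis} shows that $g := \theta \circ \theta_N^{-1} : C_S(N) \to Y$ is an honest linear isomorphism of semimodules. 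I would then define $M \in S^{m \times n}$ to be the matrix whose $j$th column is $g(B_j) \in Y \subseteq S^m$. Because $g$ is a surjective linear map and the $B_j$ generate $C_S(N)$, their images generate $Y$, giving $C_S(M) = Y$.

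It remains to identify the row space of $M$, and this is where the converse results of Section~\ref{sec_converseduality} do the real work. By construction $g$ is a linear isomorphism from $C_S(N)$ onto $C_S(M) = Y$ carrying the $j$th column of $N$ to the $j$th column of $M$ for every $j$; Corollary~\ref{cor_lorderkernel} then forces $R_S(M) = R_S(N) = X$, so $M$ is the desired matrix. The main obstacle --- and the only genuinely nontrivial point --- is precisely this last step: an arbitrary anti-isomorphism only yields that $C_S(N)$ and $Y$ are \emph{abstractly} isomorphic, and one must upgrade this to the concrete equality $C_S(M) = Y$ for a matrix whose row space is \emph{simultaneously} pinned to $X$. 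The device that makes this possible is Corollary~\ref{cor_lorderkernel}, which converts a column-index-preserving isomorphism of column spaces into equality of row spaces; the care required is to verify that the abstract map $g$ is realised by literally applying it to the actual column vectors, so that the hypotheses of the corollary (a genuine matrix over $S$, with $g$ matching up columns by index) are satisfied on the nose.
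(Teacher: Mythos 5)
Your proof is correct and follows essentially the same route as the paper's: realise $X$ as the row space of a matrix, compose the given anti-isomorphism with the (inverse) duality map via Lemma~\ref{lemma_composeantis} to obtain a linear isomorphism, apply it columnwise to construct $M$, and invoke Corollary~\ref{cor_lorderkernel} to conclude $R_S(M) = X$, with the forward direction being Theorem~\ref{thm_duality}. The only (harmless) difference is that the paper additionally fixes an auxiliary matrix $B$ with $C_S(B) = Y$, which your streamlined argument shows is unnecessary.
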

\begin{proof}
Suppose $X$ and $Y$ are anti-isomorphic. Choose two $m \times n$ matrices $A$ and $B$ such that $A$ has row space
$X$ and $B$ has column space $Y$. Then by the dual to Theorem~\ref{thm_duality}, there
is an anti-isomorphism from $C_S(A)$ to $R_S(A) = X$. By Lemma~\ref{lemma_composeantis},
composing with the anti-isomorphism from $X$ and $Y = C_S(B)$ we may thus obtain an
isomorphism $f : C_S(A) \to C_S(B)$.
 Let $D$ be the $m \times n$ matrix whose $i$th column
is the image under $f$ of the $i$th column of $A$. Then by
Corollary~\ref{cor_lorderkernel} we have $R_S(A) = R_S(D)$. Also, since $f$ is
an isomorphism, the image under $f$ of a generating set for $C_S(A)$ must be
a generating set for $C_S(B)$. In particular, the columns of $D$ are a
generating set for $C_S(B)$, that is, $C(D) = C(B) = Y$. Now by
Corollary~\ref{thm_lorderkernel}
to $f$ and its inverse, we have $R_S(D) \subseteq R_S(A) = X$ and
$X = R_S(A) \subseteq R_S(D)$. Thus, the matrix $D$ has the required properties.

The converse is Theorem~\ref{thm_duality}.
\end{proof}

\section{The $\GreenD$ Relation}\label{sec_greend}

From Theorem~\ref{thm_dgeometry} and Lemma~\ref{lemma_composeantis} we obtain
a number of equivalence geometric characterisations of Green's $\GreenD$
relation in $\ft$ and $\olt$.

\begin{theorem}[Green's $\GreenD$ Relation for $\ft^{n \times n}$ and $\olt^{n \times n}$]\label{thm_d}
Let $S = \ft$ or $S = \olt$, and let $A$ and $B$ be matrices in
$S^{n \times n}$. Then the
following are equivalent:
\begin{itemize}
\item[(i)] $A \GreenD B$ in $S^{n \times n}$;
\item[(ii)] $C_S(A)$ and $C_S(B)$ are isomorphic as semimodules;
\item[(iii)] $R_S(A)$ and $R_S(B)$ are isomorphic as semimodules;
\item[(iv)] $C_S(A)$ and $R_S(B)$ are anti-isomorphic as semimodules;
\item[(v)] $R_S(A)$ and $C_S(B)$ are anti-isomorphic as semimodules.
\end{itemize}
\end{theorem}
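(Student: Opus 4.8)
The plan is to prove Theorem~\ref{thm_d} by establishing the chain of equivalences through the earlier structural results, with $\GreenD$ bridged to the semimodule conditions via the standard characterisation of $\GreenD$ in terms of $\GreenL$ and $\GreenR$. I would begin by recalling that $A \GreenD B$ holds exactly if there is a matrix $C \in S^{n\times n}$ with $A \GreenL C$ and $C \GreenR B$. By Proposition~\ref{prop_landr}, $A \GreenL C$ means $R_S(A) = R_S(C)$ and $C \GreenR B$ means $C_S(C) = C_S(B)$. So the existence of such a $C$ is equivalent to the existence of a matrix $C$ whose row space is $R_S(A)$ and whose column space is $C_S(B)$.

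Next I would invoke the Exact Duality Theorem (Theorem~\ref{thm_dgeometry}): a matrix with row space $R_S(A)$ and column space $C_S(B)$ exists if and only if $R_S(A)$ and $C_S(B)$ are anti-isomorphic. This immediately gives the equivalence of (i) and (v). The equivalence of (i) and (iv) follows symmetrically, using the dual characterisation $A \GreenD B$ iff there exists $D$ with $A \GreenR D$ and $D \GreenL B$, which by Proposition~\ref{prop_landr} means $C_S(A) = C_S(D)$ and $R_S(D) = R_S(B)$, so that $C_S(A)$ and $R_S(B)$ must be the column and row space of a common matrix, hence anti-isomorphic by Theorem~\ref{thm_dgeometry}.

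To connect the anti-isomorphism conditions (iv) and (v) to the isomorphism conditions (ii) and (iii), I would apply the Algebraic Duality Theorem (Theorem~\ref{thm_duality}), which supplies a canonical anti-isomorphism $\theta_A$ between $R_S(A)$ and $C_S(A)$ (and likewise for $B$). Suppose (v) holds, so there is an anti-isomorphism $X \colon R_S(A) \to C_S(B)$. Composing with the anti-isomorphism $\theta_B'\colon C_S(B) \to R_S(B)$ from Theorem~\ref{thm_duality} gives, by Lemma~\ref{lemma_composeantis}, a \emph{linear isomorphism} $R_S(A) \to R_S(B)$, yielding (iii). Conversely, a linear isomorphism $R_S(A) \to R_S(B)$ composed with the anti-isomorphism $\theta_B\colon R_S(B) \to C_S(B)$ produces an anti-isomorphism $R_S(A) \to C_S(B)$, recovering (v); one checks the composite of a linear isomorphism followed by an anti-isomorphism is again an anti-isomorphism (the bracket-reversal and finite-scaling conditions compose correctly). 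The same device relates (iv) to (ii), and applying $\theta_A$ relates (ii) to (iii) directly.

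The main obstacle I anticipate is purely bookkeeping: Lemma~\ref{lemma_composeantis} is stated for the composition of \emph{two} anti-isomorphisms, so to pass between the anti-isomorphism conditions and the isomorphism conditions I must also verify the complementary fact that composing a linear isomorphism with an anti-isomorphism yields an anti-isomorphism. This is routine---linear isomorphisms preserve the bracket and all scalings, anti-isomorphisms reverse the bracket and negate finite scalings, so the composite reverses the bracket and negates finite scalings---but it must be made explicit to close the loop cleanly. Once that small lemma is in hand, the five conditions chain together with no further difficulty, so the bulk of the work is simply arranging the equivalences (i)$\Leftrightarrow$(v), (v)$\Leftrightarrow$(iii), (iii)$\Leftrightarrow$(ii), (ii)$\Leftrightarrow$(iv), and (iv)$\Leftrightarrow$(i) in a coherent order.
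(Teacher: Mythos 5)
Your proposal is correct, but it closes the cycle of equivalences by a genuinely different route from the paper. The paper proves (i) $\Rightarrow$ (v) and (i) $\Rightarrow$ (iv) essentially as you do, and passes from (iv)/(v) to (ii)/(iii) by composing two anti-isomorphisms via Lemma~\ref{lemma_composeantis}; but it returns to (i) not through the converse half of Theorem~\ref{thm_dgeometry}, as you do, but by an explicit construction proving (ii) $\Rightarrow$ (i): given an isomorphism $f \colon C_S(A) \to C_S(B)$, apply $f$ to each column of $A$ to form a matrix $D$; Corollary~\ref{cor_lorderkernel} then gives $R_S(A) = R_S(D)$, so $A \GreenL D$, while $f$ carries a generating set to a generating set, so $C_S(D) = C_S(B)$ and $D \GreenR B$. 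Your use of both directions of the Exact Duality Theorem to obtain (i) $\Leftrightarrow$ (v) and (i) $\Leftrightarrow$ (iv) outright is legitimate (with $m = n$, both $R_S(A)$ and $C_S(B)$ are $n$-generated convex subsets of $S^n$, so the theorem applies) and is arguably more economical, since the converse half of Theorem~\ref{thm_dgeometry} already contains the paper's matrix construction. The price is exactly the auxiliary fact you flag: that a linear isomorphism composed with an anti-isomorphism is again an anti-isomorphism. This is true, and your sketch is sound, but the justification that a linear isomorphism preserves the bracket deserves its one line: $\br{x}{y} = \max\{\lambda \mid \lambda x \leq y\}$ is defined order-theoretically, the order on a convex set is recovered from addition ($x \leq y$ iff $x \oplus y = y$), and a linear isomorphism preserves $\oplus$ and scaling in both directions, hence preserves the defining set of the bracket. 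The paper deliberately avoids needing this fact by arranging every composition to involve exactly two anti-isomorphisms. A small bonus of your arrangement: since each of your links is a two-way equivalence, condition (iii) is tied symmetrically into the cycle, whereas the paper's written proof only derives (iii) from (iv)/(v) and leaves the reverse implication to a tacit dual argument.
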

\begin{proof}
First suppose (i) holds. Then by definition there exists a matrix $D$
such that $A \GreenL D \GreenR B$. By Proposition~\ref{prop_landr}, we
have $R_S(A) = R_S(D)$ and $C_S(D) = C_S(B)$. But by Theorem~\ref{thm_dgeometry},
$R_S(D)$ and $C_S(D)$ are anti-isomorphic, so $R_S(A)$ and $C_S(B)$ are anti-isomorphic,
and so (v) holds. A dual argument shows that (i) implies (iv).

Next suppose (v) holds. By Theorem~\ref{thm_dgeometry}, there is an
anti-isomorphism from $C_S(A)$ to $R_S(A)$. By Lemma~\ref{lemma_composeantis}
this composes with the anti-isomorphism from $R_S(A)$ to $C_S(B)$ to produce
an isomorphism between $C_S(A)$ and $C_S(B)$, so that (ii) holds. Similar
arguments establish that (v) implies (iii), (iv) implies (iii) and (iv)
implies (ii).

Finally, suppose (ii) holds, and let $f : C_S(A) \to C_S(B)$ be an isomorphism.
Let $D$ be the matrix obtained from $A$ by applying $f$ to each column. Then
by Corollary~\ref{cor_lorderkernel}, we have $R_S(A) = R_S(D)$ so that $A \GreenL D$.
Moreover, since the isomorphism $f$ must map a generating set for $C_S(A)$ to a
generating set for $C_S(B)$, the columns of $D$ form a generating set for $C_S(D)$,
that is, $C_S(D) = C_S(B)$, so $D \GreenR B$. Thus, $A \GreenD B$ and (i) holds.
\end{proof}

Theorem~\ref{thm_d} and Theorem~\ref{thm_inherit} together yield a
description of $\GreenD$ for matrices over $\trop$ in terms of their
$\olt$-linear column or row spaces. It is natural to ask also whether
$\GreenD$ can be characterised in terms of $\trop$-linear column and row
spaces.

\begin{lemma}\label{lemma_tropolt}
Let $X$ be a convex subset of $\trop^n$ and $X'$ be the convex subset of
$\olt^n$ which it generates. Then for any $x \in X'$ the following are equivalent.
\begin{itemize}
\item[(i)] $x \notin X$;
\item[(ii)] $x$ contains $\infty$ in some component;
\item[(iii)] $x = \infty a \oplus b$ for some $a, b \in X'$ with $a$ not the zero vector; 
\item[(iv)] $x = \infty a \oplus b$ for some $a, b \in X$ with $a$ not the zero vector;
\end{itemize}
\end{lemma}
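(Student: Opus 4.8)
The plan is to prove the four conditions equivalent by establishing the cycle (i)~$\Leftrightarrow$~(ii), (ii)~$\Rightarrow$~(iv), (iv)~$\Rightarrow$~(iii), and (iii)~$\Rightarrow$~(ii). The first equivalence is essentially bookkeeping: since $X$ is a convex subset of $\trop^n$ it coincides with its own $\trop$-linear span, and by the observation in Section~\ref{sec_prelims} that the $\trop$-linear span of a subset of $\trop^n$ is the intersection with $\trop^n$ of its $\olt$-linear span, we have $X = X' \cap \trop^n$. Thus for $x \in X'$ the condition $x \notin X$ holds precisely when $x \notin \trop^n$, that is, precisely when some component of $x$ equals $\infty$; this is exactly (ii).

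The two easy links in the cycle I would dispatch next. The implication (iv)~$\Rightarrow$~(iii) is immediate, since $X \subseteq X'$. For (iii)~$\Rightarrow$~(ii), suppose $x = \infty a \oplus b$ with $a$ not the zero vector, and choose a coordinate $i$ with $a_i \neq -\infty$. Then $\infty a_i = \infty$, and since $\infty$ absorbs everything under $\oplus$ we get $x_i = \infty a_i \oplus b_i = \infty$, giving (ii).

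The substance of the lemma, and the only place I expect real difficulty, is (ii)~$\Rightarrow$~(iv). Starting from a finite expression $x = \bigoplus_k \lambda_k v_k$ with each $v_k \in X$ and $\lambda_k \in \olt$ (available because $x \in X'$), I would split the index set according to whether $\lambda_k$ equals $\infty$, is finite, or equals $-\infty$. The terms with $\lambda_k = -\infty$ are the zero vector and can be discarded. Putting $a = \bigoplus_{\lambda_k = \infty} v_k$ and $b = \bigoplus_{\lambda_k \text{ finite}} \lambda_k v_k$, a componentwise check of the distributivity of $\infty$-scaling over $\oplus$ shows $\bigoplus_{\lambda_k = \infty} \infty v_k = \infty a$, so that $x = \infty a \oplus b$. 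Both $a$ and $b$ lie in $X$ by convexity: $a$ is an $\oplus$-sum of elements of $X$, while $b$ is an $\oplus$-sum of \emph{finite} scalings of elements of $X$, which remain in $\trop^n$. It then remains to verify that $a$ is not the zero vector; were it so, $\infty a$ would itself be the zero vector and $x = b$ would lie in $\trop^n$, contradicting the standing assumption (ii) that $x$ has an $\infty$ component. The care required here — tracking which coefficients are infinite, and using the finiteness of $b$ to force $a$ to be nonzero — is the crux of the argument, and everything else reduces to the closure properties of a convex set.
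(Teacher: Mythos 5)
Your proof is correct and takes essentially the same approach as the paper's: the crux --- writing $x \in X'$ as $\infty a \oplus b$ by collecting the terms with coefficient $\infty$, observing that $a$ and $b$ lie in $X$ by convexity, and ruling out $a$ being the zero vector by contradiction --- is exactly the paper's argument for (i) $\Rightarrow$ (iv), merely run from hypothesis (ii) rather than (i). The remaining differences are cosmetic: you orient the cycle as (i) $\Leftrightarrow$ (ii), (ii) $\Rightarrow$ (iv) $\Rightarrow$ (iii) $\Rightarrow$ (ii), and make (i) $\Leftrightarrow$ (ii) explicit via the Section~\ref{sec_prelims} fact that $X = X' \cap \trop^n$, where the paper instead proves (i) $\Rightarrow$ (iv) directly and treats (ii) $\Rightarrow$ (i) as obvious.
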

\begin{proof}
Suppose (i) holds, that is, that $x \notin X$. Since $x \in X'$, it may be
written as a $\olt$-linear combination of finitely many vectors in $X$. Using
distributivity and commutativity to collect together the terms with coefficient
$\infty$ and the terms with other coefficients, we may thus write
$x = \infty a \oplus b$ where $a$ is a sum of vectors in $X$ (and hence lies
in $X$), and $b$ is a $\trop$-linear combination of vectors in $X$ (and hence
lies in $X$). Finally, if $a$ were the zero vector then we would have
$x = b \in X$ giving a contradiction. Thus, (iv) holds.

That (iv) implies (iii) is immediate. If (iii) holds then since $a$ is not
the zero vector, $\infty a$ contains $\infty$ in some component, so
$x = \infty a \oplus b$ contains $\infty$ in some component, and (ii) holds.
Finally, that (ii) implies (i) is obvious.
\end{proof}

\begin{lemma}\label{lemma_welldef}
Let $X$ be a convex subset of $\trop^n$ and $X'$ be the convex subset of
$\olt^n$ which it generates. Then for any $a, b, a', b' \in X$ we have
that $\infty a \oplus b = \infty a' \oplus b'$ if and only if
$d_H(a,a') \neq \infty$ and $b \oplus \lambda a = b' \oplus \lambda a$
for all sufficiently large $\lambda$.
\end{lemma}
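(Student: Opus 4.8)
The plan is to reduce everything to a componentwise computation, exploiting the fact that, for vectors in $\trop^n$, the only way an $\infty$ can enter an $\olt$-linear combination is via scaling by $\infty$. Throughout I would write $\mathrm{supp}(a) = \{i : a_i \neq -\infty\}$ for the set of coordinates on which $a$ is finite. The first step is to record the effect of the two expressions in the statement. Since $a \in \trop^n$, the special rule $(-\infty)+\infty = -\infty$ gives $(\infty a)_i = \infty$ when $i \in \mathrm{supp}(a)$ and $(\infty a)_i = -\infty$ otherwise; combining this with $b \in \trop^n$ yields
$$(\infty a \oplus b)_i = \begin{cases} \infty & \text{if } i \in \mathrm{supp}(a),\\ b_i & \text{if } i \notin \mathrm{supp}(a).\end{cases}$$
Comparing with the analogous formula for $\infty a' \oplus b'$, and using that no component of $b$ or $b'$ equals $\infty$, the equation $\infty a \oplus b = \infty a' \oplus b'$ holds if and only if $\mathrm{supp}(a) = \mathrm{supp}(a')$ and $b_i = b'_i$ for every $i \notin \mathrm{supp}(a)$. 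The whole proof then consists in matching the two conditions on the right-hand side of the lemma to these two conditions in turn.

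For the second condition this is a short computation: since $(b \oplus \lambda a)_i = \max(b_i, \lambda + a_i)$, for $i \in \mathrm{supp}(a)$ the term $\lambda + a_i$ dominates once $\lambda$ is large, so both $b \oplus \lambda a$ and $b' \oplus \lambda a$ equal $\lambda + a_i$ there, while for $i \notin \mathrm{supp}(a)$ one has $(b \oplus \lambda a)_i = b_i$ for every $\lambda$. Hence $b \oplus \lambda a = b' \oplus \lambda a$ for all sufficiently large $\lambda$ if and only if $b_i = b'_i$ for all $i \notin \mathrm{supp}(a)$, which is exactly the second condition extracted above.

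The main work is to prove that $d_H(a, a') \neq \infty$ if and only if $\mathrm{supp}(a) = \mathrm{supp}(a')$, and this is the step I expect to be the real obstacle. Here I would apply Proposition~\ref{prop_bracketequiv} to see that, for nonzero $a$, one has $\br{a}{a'} = -\infty$ precisely when some index has $a_i$ finite but $a'_i = -\infty$, that is, precisely when $\mathrm{supp}(a) \not\subseteq \mathrm{supp}(a')$, and symmetrically for $\br{a'}{a}$. Unwinding the definition of $d_H$: if $a$ is a finite scalar multiple of $a'$ then $d_H(a,a') = 0$ and the supports plainly coincide, while otherwise $d_H(a,a') = -(\br{a}{a'} \otimes \br{a'}{a})$, which is finite exactly when neither bracket equals $-\infty$, i.e.\ exactly when $\mathrm{supp}(a) = \mathrm{supp}(a')$.

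The delicate points here, which I would dispose of by explicit case analysis, are the behaviour of the zero vector (all components $-\infty$), where one of the brackets becomes $\infty$ rather than a real number, and the interaction of the special rule $\infty \otimes (-\infty) = -\infty$ with the product $\br{a}{a'} \otimes \br{a'}{a}$ appearing in the definition of $d_H$; one must check in each of these boundary cases that the sum is $-\infty$ exactly when the supports differ, so that the support criterion survives intact. Assembling the three equivalences — the translation of $\infty a \oplus b = \infty a' \oplus b'$, the support characterisation of $d_H(a,a') \neq \infty$, and the eventual-equality computation for $b \oplus \lambda a$ — then gives the lemma.
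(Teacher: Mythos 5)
Your proposal is correct and follows essentially the same route as the paper's own proof: both reduce the identity $\infty a \oplus b = \infty a' \oplus b'$ to a componentwise statement about the $-\infty$-positions (supports) of $a$ and $a'$ and agreement of $b, b'$ off the support, and then match these to the two conditions in the lemma. The only difference is that you unwind the equivalence $d_H(a,a') \neq \infty \iff \mathrm{supp}(a) = \mathrm{supp}(a')$ explicitly via Proposition~\ref{prop_bracketequiv} and the boundary cases, where the paper simply asserts it; this added care is harmless and arguably an improvement.
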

\begin{proof}
Suppose $\infty a \oplus b = \infty a' \oplus b'$. Since $a, b \in \trop^n$, they
do not contain any $\infty$ positions. It follows that
$\infty a \oplus b$ contains an $\infty$ in position $i$ exactly if $a$ does
\textbf{not} contain $-\infty$ in this position. By symmetry of assumption
this is true exactly if $a'$ does \textbf{not} contain $-\infty$ in this
position. Thus, $a$ and $a'$ contain $-\infty$ in exactly the same positions.
Since neither contains $\infty$, this means that $d_H(a,a') \neq \infty$.
Notice also that in any position where $a$ contains $-\infty$, the expression
$\infty a \oplus b$ takes the value of $b$ and hence by symmetry also of $b'$.
Thus, $b$ and $b'$ agree in such positions.
Hence, if we choose $\lambda$ large enough that $\lambda a$ exceeds $b$ and
$b'$ in all positions where $a$ is not $-\infty$, then we obtain
$b \oplus \lambda a = b' \oplus \lambda a$.

Conversely, suppose $d_H(a,a') \neq \infty$ and
$b \oplus \lambda a = b' \oplus \lambda a$ for all sufficiently large
$\lambda$. Then $a$ and $a'$ have $-\infty$ in the same positions, from
which it follows that $\infty a = \infty a'$. Moreover, since
$b \oplus \lambda a = b' \oplus \lambda a$ it is easy to see that
$b$ and $b'$ agree in every position where $a$ takes the value $-\infty$,
from which it follows that $\infty a \oplus b = \infty a \oplus b' = \infty a' \oplus b'$.
\end{proof}

\begin{theorem}[Inheritance and Extension of Isomorphisms]\label{thm_tropoltiso}
Let $X$ and $Y$ be convex subsets of $\trop^i$ and $\trop^j$ respectively,
and let $X'$ and $Y'$ be the convex subsets of $\olt^i$ and $\olt^j$ which
they generate. Then $X$ and $Y$ are isomorphic (as semimodules over $\trop$)
if and only if $X'$ and $Y'$ are isomorphic (as semimodules over $\olt$).
\end{theorem}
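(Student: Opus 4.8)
The statement is an equivalence: isomorphism of $X$ and $Y$ over $\trop$ holds if and only if isomorphism of the generated sets $X'$ and $Y'$ over $\olt$ holds. The plan is to prove each direction by showing that an isomorphism in one setting can be transported to the other, using Lemmas~\ref{lemma_tropolt} and \ref{lemma_welldef} as the bridge between the two semirings. The key structural fact supplied by these lemmas is that every element of $X'$ which is \emph{not} in $X$ has the canonical form $\infty a \oplus b$ with $a, b \in X$ and $a$ nonzero, and that two such expressions are equal precisely when $d_H(a,a') \neq \infty$ and $b \oplus \lambda a = b' \oplus \lambda a$ for all sufficiently large $\lambda$. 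This gives a description of $X'$ entirely in terms of data living in $X$ (and the metric $d_H$ on $X$), which is exactly what is needed to extend or restrict isomorphisms.

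For the forward direction, suppose $f : X \to Y$ is a $\trop$-semimodule isomorphism. First I would show that $f$ is an isometry for $d_H$ on the projectivisations, or at least that $f$ preserves the relation $d_H(a,a') \neq \infty$; this should follow because $f$, being a lattice isomorphism preserving finite scalings, preserves suprema of the form $\sup\{\lambda a \mid \lambda \in \ft\}$ and hence the finiteness condition characterising $\infty$-directions. I would then \emph{define} an extension $f' : X' \to Y'$ by setting $f'(\infty a \oplus b) = \infty f(a) \oplus f(b)$ for $a, b \in X$, and $f'(x) = f(x)$ for $x \in X$. The main work is checking this is \emph{well-defined}: if $\infty a \oplus b = \infty a' \oplus b'$ in $X'$, then by Lemma~\ref{lemma_welldef} we have $d_H(a,a') \neq \infty$ and $b \oplus \lambda a = b' \oplus \lambda a$ eventually, and applying $f$ (which preserves $d_H \neq \infty$ and commutes with finite scalings and $\oplus$) yields the same two conditions for $f(a), f(b), f(a'), f(b')$, so Lemma~\ref{lemma_welldef} again gives $\infty f(a) \oplus f(b) = \infty f(a') \oplus f(b')$. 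Once well-definedness is secured, I would verify that $f'$ respects $\oplus$ and scaling by all of $\olt$ (the $-\infty$ and finite cases being inherited from $f$, and the $\infty$ case following from the canonical form), and that $f'$ is bijective because its obvious inverse is built the same way from $f^{-1}$.

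For the reverse direction, suppose $g : X' \to Y'$ is an $\olt$-isomorphism. By Lemma~\ref{lemma_tropolt}, $X$ is exactly the set of elements of $X'$ containing no $\infty$ component, equivalently those that cannot be written as $\infty a \oplus b$ with $a$ nonzero; since $g$ is a lattice isomorphism preserving scaling by $\infty$, it must map this distinguished subset onto the corresponding subset of $Y'$, namely $Y$. Hence $g$ restricts to a bijection $X \to Y$, and this restriction is automatically a $\trop$-semimodule morphism since $\oplus$ and finite scaling are inherited. I expect the main obstacle to be the forward direction's well-definedness argument, specifically the careful verification that a $\trop$-isomorphism $f$ genuinely preserves the two conditions of Lemma~\ref{lemma_welldef}---in particular that $f$ preserves the $d_H \neq \infty$ relation and the eventual-equality condition $b \oplus \lambda a = b' \oplus \lambda a$. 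Preserving the latter reduces to the fact that $f$ commutes with finite scalings and $\oplus$ and is a bijection, so the equation holds after applying $f$ exactly when it holds before; preserving $d_H \neq \infty$ requires identifying the $-\infty$-support of a vector intrinsically (as the set of positions where the vector is the bottom of its restriction), which $f$ respects as a lattice isomorphism.
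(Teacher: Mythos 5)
Your proposal is correct and follows essentially the same route as the paper: the $\olt$-to-$\trop$ direction by restricting via the characterisation of $X$ inside $X'$ from Lemma~\ref{lemma_tropolt}, and the $\trop$-to-$\olt$ direction by extending via $\hat{g}(\infty a \oplus b) = \infty g(a) \oplus g(b)$, with well-definedness checked through Lemma~\ref{lemma_welldef} exactly as you describe. Your worry about preservation of the condition $d_H(a,a') \neq \infty$ is handled in the paper by the same observation you make (a semimodule isomorphism preserves order and finite scaling, hence the Hilbert metric), so no genuine gap remains.
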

\begin{proof}
Suppose first that $f : X' \to Y'$ is an isomorphism. We claim that $f$
sends elements of $X$ to elements of $Y$. Indeed, suppose $x \in X$. Then
by Lemma~\ref{lemma_tropolt}, $x$ cannot be written in the form $a \infty \oplus b$
for any $a, b \in X'$ with $a$ not the zero vector. Since $f$ is an
isomorphism (and in particular
preserves the zero vector) it follows that $f(x)$ cannot be written as
$\infty c \oplus d$ for any $c, d \in Y'$ with $c$ not the zero vector. Thus,
by Lemma~\ref{lemma_tropolt} again, $f(x)$ lies in $Y$. A similar argument
shows that the inverse of $f$ maps $Y$ into $X$, and it follows that $f$
restricts to an isomorphism of $X$ to $Y$.

Conversely, suppose that $g : X \to Y$ is an isomorphism. We claim that
$g$ admits an extension to $X'$ well defined by:
$$\hat{g}(\infty a \oplus b) = \infty g(a) \oplus g(b).$$
To show that this is well defined, suppose $\infty a \oplus b = \infty a' \oplus b'$.
Then by Lemma~\ref{lemma_welldef} we have
$d_H(a,a') \neq \infty$ and $b \oplus \lambda a = b' \oplus \lambda a$
for all sufficiently large $\lambda$.
Using the fact that $g$ is an isomorphism (and in particular preserves
the Hilbert metric) we have
$d_H(g(a),g(a')) \neq \infty$ and $g(b) \oplus \lambda g(a) = g(b') \oplus \lambda a$
for all sufficiently large $\lambda$. Now by Lemma~\ref{lemma_welldef} again,
$\infty g(a) \oplus g(b) = \infty g(a') \oplus g(b')$, as required to show that
$\hat{g}$ is well-defined.

Next we claim that $\hat{g}$ is linear. The fact that $\hat{g}$ respects
addition and scaling by elements of $\trop$ follows immediately from the
definition and the elementary properties of the semiring $\olt$. It remains
to show that $\hat{g}$ respects scaling by $\infty$. Let $x \in X'$. Then
$x$ can be written as $\infty a \oplus b$ for some $a,b \in X$, and we have
\begin{align*}
\hat{g}(\infty x) &= \hat{g}(\infty(\infty a \oplus b)) = \hat{g}(\infty a \oplus \infty b) = \infty g(a \oplus b) = \infty g(a) \oplus \infty g(b) \\
&= \infty \infty g(a) \oplus \infty g(b) = \infty (\infty g(a) \oplus g(b)) = \infty \hat{g}(\infty a \oplus b) = \infty \hat{g}(x).
\end{align*}

Now if $h : Y \to X$ is the inverse of $g$ then the same argument shows that
$h$ extends to a linear map $\hat{h} : Y' \to X'$ satisfying $\hat{h}(\infty c \oplus d) = \infty h(c) \oplus h(d)$
for all $c, d \in Y$. Thus, for any $x \in X'$ we have $x = a \infty \oplus b$ for
some $a, b \in X$, whereupon
$$\hat{h}(\hat{g}(x)) = \hat{h}(\hat{g}(\infty a \oplus b)) = \hat{h}(\infty g(a) \oplus g(b)) = \infty h(g(a)) \oplus h(g(b)) = \infty a \oplus b = x.$$
By the same argument we have $\hat{g}(\hat{h}(y)) = y$ for all $y \in Y'$, so that
$\hat{h}$ is an inverse for $\hat{g}$. Thus, $\hat{g}$ is an isomorphism
from $X$ to $Y$.
\end{proof}

Combining Theorem~\ref{thm_tropoltiso} with Theorem~\ref{thm_d}, we obtain
an additional description of the $\GreenD$ relation for full matrix
semigroups over $\trop$.
\begin{theorem}[Green's $\GreenD$ Relation for $\trop^{n \times n}$]\label{thm_dtrop}
Let $A, B \in \tropn$. Then the following are equivalent:
\begin{itemize}
\item[(i)] $A \GreenD B$ in $\tropn$;
\item[(ii)] $A \GreenD B$ in $\oltn$ (and the other four equivalent conditions given by \\ Theorem~\ref{thm_d} in the case $S = \olt$);
\item[(iii)] $C_\trop(A)$ and $C_\trop(B)$ are isomorphic;
\item[(iv)] $R_\trop(A)$ and $R_\trop(B)$ are isomorphic;
\end{itemize}
\end{theorem}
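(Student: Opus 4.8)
The plan is to prove Theorem~\ref{thm_dtrop} by establishing the cycle of equivalences, using the machinery already developed. The most direct route is to show (i)$\iff$(ii) and (ii)$\iff$(iii) and (ii)$\iff$(iv), since the conditions in (iii) and (iv) are essentially the $\trop$-linear shadows of the $\olt$-linear conditions appearing in Theorem~\ref{thm_d}. The key external ingredients are Theorem~\ref{thm_inherit} (inheritance of $\GreenD$ from $\oltn$ down to $\tropn$) and Theorem~\ref{thm_tropoltiso} (isomorphism of $\trop$-convex sets is equivalent to isomorphism of the $\olt$-convex sets they generate).

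First I would dispatch (i)$\iff$(ii). This is immediate from Theorem~\ref{thm_inherit}, which asserts that the $\GreenD$ relation in $\tropn$ is exactly the restriction of the $\GreenD$ relation in $\oltn$. Thus for $A, B \in \tropn$ we have $A \GreenD B$ in $\tropn$ if and only if $A \GreenD B$ in $\oltn$, and the parenthetical remark in (ii) simply records that the latter is in turn equivalent to each of the four conditions furnished by Theorem~\ref{thm_d} with $S = \olt$.

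Next I would handle (ii)$\iff$(iii) and (ii)$\iff$(iv). The crucial observation is that the $\olt$-linear column space $C_\olt(A) = C(A)$ is precisely the convex subset of $\olt^n$ generated by the $\trop$-convex set $C_\trop(A)$; this follows from the remarks in Section~\ref{sec_prelims} that the $\trop$-linear span of a set $X \subseteq \trop^n$ is the intersection with $\trop^n$ of its $\olt$-linear span, so that the columns of $A$ generate $C_\trop(A)$ over $\trop$ and $C(A)$ over $\olt$. With this identification in hand, Theorem~\ref{thm_tropoltiso} (applied with $X = C_\trop(A)$, $Y = C_\trop(B)$, so $X' = C(A)$, $Y' = C(B)$) says that $C_\trop(A)$ and $C_\trop(B)$ are isomorphic over $\trop$ if and only if $C(A)$ and $C(B)$ are isomorphic over $\olt$. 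But by Theorem~\ref{thm_d}(ii), the latter is one of the conditions equivalent to $A \GreenD B$ in $\oltn$, which is (ii). This establishes (ii)$\iff$(iii), and the identical argument applied to row spaces (using condition (iii) of Theorem~\ref{thm_d}) gives (ii)$\iff$(iv).

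I do not expect a serious obstacle here, since the theorem is genuinely a synthesis of two substantial results proved earlier; the only point requiring a little care is the bookkeeping identification $C(A) = C_\olt(A)$ as the $\olt$-span of $C_\trop(A)$, and the dual statement for row spaces, so that Theorem~\ref{thm_tropoltiso} can be applied cleanly. The mild subtlety worth flagging explicitly is that Theorem~\ref{thm_tropoltiso} concerns abstract semimodule isomorphisms of convex sets, not isomorphisms required to respect any chosen generating set; this is exactly what conditions (iii) and (iv) ask for, so there is no mismatch, but it is worth noting in the write-up that the generators (the columns or rows) play no privileged role in the isomorphism conditions.
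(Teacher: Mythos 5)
Your proposal is correct and follows essentially the same route as the paper's own proof: (i)$\iff$(ii) via Theorem~\ref{thm_inherit}, and (ii)$\iff$(iii), (ii)$\iff$(iv) by identifying $C_\olt(A)$ as the $\olt$-convex set generated by $C_\trop(A)$ (and dually for rows) and then invoking Theorem~\ref{thm_tropoltiso} together with Theorem~\ref{thm_d}. The only cosmetic difference is that you justify the identification via the span remarks of Section~\ref{sec_prelims}, while the paper phrases it directly in terms of the columns generating both spaces; the substance is identical.
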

\begin{proof}
The equivalence of (i) and (ii) is part of Theorem~\ref{thm_inherit}. 
By Theorem~\ref{thm_d}, (ii) is equivalent to the statement that
$C_\olt(A)$ and $C_\olt(B)$ are isomorphic. But $C_\olt(A)$ [respectively,
$C_\olt(B)$] is generated as a semimodule over $\olt$ by the columns of $A$ [$B$], and hence by
$C_\trop(A)$ [$C_\trop(B)$]. Hence, by Theorem~\ref{thm_tropoltiso}, (ii) is
equivalent to (iii). The equivalence of (ii) and (iv) is established by a
dual argument.
\end{proof}

\section{Remarks}\label{sec_remarks}

We remark briefly on the extent to which our algebraic results, and in
particular Theorem~\ref{thm_d}, might apply in wider contexts. Considering
Theorem~\ref{thm_d}, we note that while the
equivalence of (i), (iv) and (v) is closely bound up with matrix duality, conditions (i), (ii) and (iii)
can be shown directly to be equivalent without explicit recourse to
duality, by using Theorem~\ref{thm_lorderkernel} and Corollary~\ref{cor_lorderkernel}. These results depend
essentially only upon Lemma~\ref{lemma_kernel}. While we proved this lemma
using matrix duality, it is likely that an appropriate analogues hold in other
semirings for different reasons. The conditions (i), (ii) and (iii) are
equivalent, and hence yield characterisations of $\GreenD$ in terms
of the isomorphisms of row spaces and isomorphisms of column spaces, for
matrices over any such semiring. More generally, we believe that semirings
satisfying the condition given in the tropical case by Lemma~\ref{lemma_kernel}
are likely to form a ``well-behaved'' class, encompassing many examples of
interest. As such, they may be deserving of axiomatic study.

Since our methods do not essentially depend upon the
matrices considered being square, similar methods should yield corresponding
results for Green's relations in the small categories of all finite
dimensional matrices (not necessarily square or of uniform size) over
$\ft$, $\trop$ and $\olt$ respectively.

Finally, we note that the $\GreenJ$ relation and the $\leq_\GreenJ$
pre-order for tropical matrix semigroups remain poorly understood, and are
deserving of further study.

\section*{Acknowledgements}

This research was supported by EPSRC grant number EP/H000801/1
(\textit{Multiplicative Structure of Tropical Matrix Algebra}).
The second author's research is also supported by an RCUK
Academic Fellowship. The authors thank Zur Izhakian, Marianne Johnson,
Stuart Margolis and Sergei Sergeev for helpful conversations.

\bibliographystyle{plain}

\def\cprime{$'$} \def\cprime{$'$}

\end{document}